\DeclareMathOperator{\Ran}{Ran}
\DeclareMathOperator{\Ker}{Ker}
\DeclareMathOperator{\diag}{diag}
\DeclareMathOperator{\Span}{span}
\DeclareMathOperator*{\Res}{Res}
\DeclareMathOperator{\BMO}{BMO}
\DeclareMathOperator{\BMOA}{BMOA}
\DeclareMathOperator{\VMOA}{VMOA}
\newcommand{\abs}[1]{{\lvert#1\rvert}}
\newcommand{\norm}[1]{\lVert#1\rVert}
\newcommand{\jap}[1]{\langle#1\rangle}
\newcommand{\bbT}{{\mathbb T}}
\newcommand{\bbR}{{\mathbb R}}
\newcommand{\bbC}{{\mathbb C}}
\newcommand{\bbN}{{\mathbb N}}
\newcommand{\bbZ}{{\mathbb Z}}
\newcommand{\bbD}{{\mathbb D}}
\newcommand{\calF}{{\mathcal F}}
\newcommand{\calH}{{\mathcal H}}
\newcommand{\calT}{\mathcal{T}}
\newcommand{\calC}{\mathcal{C}}
\newcommand{\scrC}{\mathscr{C}}
\newcommand{\bh}{\mathbf h}
\newcommand{\bone}{\mathbf 1}
\newcommand{\bu}{\mathbf u}
\newcommand{\bx}{\mathbf x}
\newcommand{\by}{\mathbf y}
\newcommand{\bpsi}{\pmb \psi}
\numberwithin{equation}{section}
\renewcommand{\[}{\begin{equation}}
\renewcommand{\]}{\end{equation}}
\theoremstyle{plain}
\newtheorem{theorem}{\bf Theorem}[section]
\newtheorem*{theorem*}{Theorem 1.1$'$}
\newtheorem{lemma}[theorem]{\bf Lemma}
\newtheorem{proposition}[theorem]{\bf Proposition}
\theoremstyle{definition}
\newtheorem*{definition*}{\bf Definition}
\theoremstyle{remark}
\newtheorem*{remark*}{\bf Remark}
\newtheorem{remark}[theorem]{\bf Remark}
\DeclareFontFamily{U}{mathx}{\hyphenchar\font45}
\DeclareFontShape{U}{mathx}{m}{n}{<5> <6> <7> <8> <9> <10>
<10.95> <12> <14.4> <17.28> <20.74> <24.88> mathx10}{}
\DeclareSymbolFont{mathx}{U}{mathx}{m}{n}
\DeclareMathAccent{\wc}{0}{mathx}{"71}
\newcommand{\wt}{\widetilde}
\newcommand{\wh}{\widehat}
\newcommand{\eps}{\varepsilon}
\newcommand{\1}{\mathbbm{1}}
\newcommand{\proj}{P}
\newcommand{\sv}{{s}}
\newcommand{\svt}{{\wt s}}
\begin{document}

\title[Inverse theory for non-compact Hankel operators]{Inverse spectral theory for a class of non-compact Hankel operators}

\author{Patrick G\'erard}
\address{Universit\'e Paris-Sud XI, Laboratoire de Math\'ematiques d'Orsay, CNRS, UMR 8628, et Institut Universitaire de France}
\email{Patrick.Gerard@math.u-psud.fr}

\author{Alexander Pushnitski}
\address{Department of Mathematics, King's College London, Strand, London, WC2R~2LS, U.K.}
\email{alexander.pushnitski@kcl.ac.uk}

\subjclass[2010]{47B35,30H10}

\keywords{Hankel operators, inverse spectral problem, inner functions}

\begin{abstract}
We characterize all bounded Hankel operators $\Gamma $ such that $\Gamma^*\Gamma$ has finite spectrum. 
We identify spectral data corresponding to such operators and construct inverse spectral theory 
including the characterization of these spectral data.
\end{abstract}

\date{16 March 2018}

\maketitle

\section{Introduction and main results}\label{sec.a}

\subsection{Overview}

Let $\calH^2(\bbT)\subset L^2(\bbT)$ be the standard Hardy class, and 
let $\proj$ be the orthogonal projection onto $\calH^2(\bbT)$ in $L^2(\bbT)$ (the Szeg\H o projection). 
In \cite{GG00}, the \emph{cubic Szeg\H o equation} 
$$
i\frac{\partial u}{\partial t} =\proj(\abs{u}^2 u), \quad u=u(z;t), \quad z\in \bbT, \quad t\in \bbR,
$$
was introduced as a model for totally non-dispersive evolution equations. 
Here $u(\cdot,t)\in \calH^2(\bbT)$ for each $t$. 
It turned out \cite{GG00,GG0} that this equation is completely integrable and possesses a Lax pair
(in fact, two Lax pairs). 
The Lax pairs involve the Hankel operators $H_u$ and $H_{S^*u}$ 
corresponding to the symbols $u$ and $S^*u(z)=\frac{u(z)-u(0)}{z}$
($S^*$ is the standard backwards shift operator); 
precise definitions are given below. 
In particular, if the operators $H_u$ and $H_{S^*u}$ are compact, then the singular values of both 
operators are integrals of motion for the cubic Szeg\H{o} equation. 

In order to solve the Cauchy problem for the cubic Szeg\H{o} equation, 
one must therefore develop a certain version of direct and inverse spectral theory 
for the Hankel operators $H_u$ and $H_{S^*u}$.
The spectral data in this problem consists of the set of singular values of $H_u$, 
the set of certain inner functions, parameterising the Schmidt subspaces of $H_u$
(i.e. the eigenspaces of $\abs{H_u}$) and similar parameters for $H_{S^*u}$. 
This was achieved in \cite{GG1} for $u\in \VMOA(\bbT)$, 
which corresponds to \emph{compact} Hankel operators $H_u$ and $H_{S^*u}$. 

One of the ultimate aims in the study of the cubic Szeg\H{o} equation
is to understand the propagation of singularities. 
For example, one would like to understand the behaviour of the solution 
to the cubic Szeg\H{o} equation if the initial condition has a jump type 
singularity on the unit circle. 
As is well known, the operators $H_u$, $H_{S^*u}$  will be non-compact in this case. 
Thus, one faces the problem of extending the direct and inverse
spectral theory of \cite{GG1} to (bounded) non-compact $H_u$, $H_{S^*u}$. 
At the moment, this goal remains distant. 

In this paper, which has a methodological flavour, we make a first step in this direction. 
We consider bounded \emph{non-compact} Hankel operators $H_u$, $H_{S^*u}$
such that the spectra of $\abs{H_u}$, $\abs{H_{S^*u}}$ 
consist of finitely many eigenvalues (which are allowed to have infinite
multiplicities). 
This is certainly a very restrictive condition, which in particular excludes 
jump type singularities of $u$. 
However, this condition allows us to focus on the structure of the existing proofs and to significantly 
simplify and improve them in the following respects:
\begin{itemize} 
\item
Our proof of the surjectivity of the spectral map is achieved through a simple and direct algebraic calculation
(the proof of \cite{GG1} used a difficult indirect topological argument). 
\item
We provide a direct proof of the invertibility of a certain auxiliary operator 
(we call it a complex Cauchy matrix) which 
plays a central role in the construction. 
\item
In contrast to \cite{GG1}, our proof does not depend on compactness. 
\end{itemize}

We hope that the streamlined proofs presented in this paper will lead to further 
progress in this circle of problems.

\subsection{Hankel operators and Schmidt subspaces}
For general background on the spectral theory of Hankel operators, we refer to \cite{Nikolski,Peller}. 
For $u\in\BMOA(\bbT)=\BMO(\bbT)\cap\calH^2(\bbT)$, we consider the \emph{anti-linear} 
Hankel operator $H_u$ on the Hardy class $\calH^2(\bbT)$
defined by 
$$
H_u(f)=P(u\cdot\overline{f}), \quad f\in \calH^2(\bbT). 
$$
It is evident that the matrix of $H_u$ in the standard basis of $\calH^2(\bbT)$ is given by 
$$
(H_uz^n,z^m)=\wh u(n+m),
$$
where $u(z)=\sum_{n=0}^\infty \wh u(n)z^n$. Thus, $H_u$ is unitarily equivalent to the operator
$\Gamma\calC$ in $\ell^2(\bbZ_+)$, where $\Gamma$ is the Hankel matrix,
$\Gamma=\{\wh u(n+m)\}_{n,m=0}^\infty$, and $\calC$ is the operator of complex conjugation, 
$\calC\{x_n\}_{n=0}^\infty=\{\overline{x_n}\}_{n=0}^\infty$.
Considering anti-linear (rather than linear) Hankel operators is perhaps slightly non-standard, but
as we shall see, this approach has some advantages. 
In particular,  $H_u$ satisfies the symmetry relation 
\[
(H_uf,g)=(H_ug,f), \quad f,g\in \calH^2(\bbT), 
\label{n10}
\]
which implies that 
\[
(\Ran H_u)^\perp=\Ker H_u.
\label{n9}
\]
Further, it is easy to see that the square $H_u^2$ 
is a linear self-adjoint operator in $\calH^2(\bbT)$, 
which is unitarily equivalent to the operator $\Gamma\Gamma^*$ in $\ell^2$.

We also need Toeplitz operators on $\calH^2(\bbT)$; for a symbol $a\in L^\infty(\bbT)$, 
the Toeplitz operator $T_a$ on $\calH^2(\bbT)$ is defined by
$$
T_a(f)=P(a\cdot f). 
$$
In fact, we will use this definition for unbounded symbols $a$ as well, but the corresponding Toeplitz operators will turn out to be well defined
and bounded on suitable subspaces.

We recall (see e.g. \cite[Theorem 1.2.6]{Peller}) 
that for an inner function $\theta$, the operator $H_\theta^2$ is an orthogonal projection in $\calH^2(\bbT)$, and 
its range $\Ran H_\theta=\Ran H_\theta^2$ is the \emph{model space},
\[
\Ran H_\theta
=
\calH^2(\bbT)\cap(z\theta \calH^2(\bbT))^\perp
=
\{f\in \calH^2(\bbT): \theta\overline{f}\in \calH^2(\bbT)\}. 
\label{n11}
\]
The action of $H_\theta$ on $\Ran H_\theta$ is the simple involution 
$$
H_\theta f=\theta\overline{f}. 
$$

In \cite{A}, we have
described the eigenspaces of $H_u^2$ corresponding to non-zero eigenvalues.
Denoting 
$$
E_H(s)=\Ker(H_u^2-s^2I), \quad s>0,
$$
we have proved that for every bounded Hankel operator $H_u$, every non-zero subspace $E_H(s)$ can be described as
\[
E_H(s)=T_p\Ran H_\theta. 
\label{z0}
\]
Here $\theta$ is an inner function and  $p\in\calH^2(\bbT)$ is an \emph{isometric multiplier} on $\Ran H_\theta$. 
This means that $T_p$ acts isometrically on $\Ran H_\theta$; in other words,
$pf\in \calH^2(\bbT)$ for every $f\in \Ran H_\theta$ and 
$$
\norm{pf}=\norm{f}, \quad f\in \Ran H_\theta. 
$$
Furthermore, the action of $H_u$ on $\Ran H_\theta$ is given by 
\[
H_uT_p=sT_pH_\theta, \quad \text{ on $\Ran H_\theta$.}
\label{z00}
\]

\subsection{The operator $K_u$}
The shift operator $S$ on $\calH^2(\bbT)$ is defined as
$$
Sf(z)=zf(z),\quad  z\in \bbT\ .
$$
Next, for $u\in\BMOA(\bbT)$ the Hankel operator $H_u$ satisfies 
 \[
H_uS=S^*H_u=H_{S^*u}\ .
\label{a4a}
\]
We set $K_u:=H_{S^*u}$.  
We have a crucial identity
\[
K_u^2=H_uSS^*H_u=H_u^2-(\cdot,u)u\ ,
\label{a5}
\]
where $(\cdot, u)u$ denotes the rank one operator corresponding to $u$, considered as an element
of $\calH^2(\bbT)$. 
For $s>0$, similarly to $E_H(s)$, we denote
$$
E_K(s)=\Ker(K_u^2-s^2I). 
$$
We recall the basic statement which shows that (as a consequence of \eqref{a5})
the eigenspaces $E_H(s)$ and $E_K(s)$ differ by the one-dimensional subspace 
spanned by $u$. 
\begin{proposition}\label{lma.z1}\cite[Lemma 2.1]{A}
Let $s>0$ be a singular value of either $H_u$ or $K_u$, i.e. 
$$
E_H(s)+E_K(s)\not=\{0\}.
$$
Then one (and only one) of the following properties holds: 
\begin{enumerate}[\rm (1)]
\item
$u\not\perp E_H(s)$, and $E_K(s)=E_H(s)\cap u^\perp$;
\item
$u\not\perp E_K(s)$, and $E_H(s)=E_K(s)\cap u^\perp$.
\end{enumerate}
\end{proposition}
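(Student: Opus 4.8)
The plan is to exploit the identity \eqref{a5}, which says $K_u^2 = H_u^2 - (\cdot,u)u$, i.e. $K_u^2$ and $H_u^2$ differ by a rank-one self-adjoint perturbation. I would work on the (closed) subspace $\Clos(E_H(s)+E_K(s))$ and analyze how this rank-one term couples the two eigenspaces. First I would record that $E_H(s)$ and $E_K(s)$ are both invariant for the rank-one operator $(\cdot,u)u$ only after projecting onto $u$; more precisely, for $f\in E_H(s)$ one has $K_u^2 f = s^2 f - (f,u)u$, so $f\in E_K(s)$ if and only if $f\perp u$, and symmetrically for $g\in E_K(s)$, $H_u^2 g = s^2 g + (g,u)u$, so $g\in E_H(s)$ iff $g\perp u$. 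This already gives the inclusions $E_H(s)\cap u^\perp\subseteq E_K(s)$ and $E_K(s)\cap u^\perp\subseteq E_H(s)$, and in fact $E_H(s)\cap u^\perp = E_K(s)\cap u^\perp$.

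Next I would show the dichotomy. Suppose first that $u\perp E_H(s)$. Then every $f\in E_H(s)$ lies in $u^\perp$, so $E_H(s)\subseteq E_K(s)$; I then need to rule out that $E_K(s)$ is strictly larger, which would have to happen through a vector $g\in E_K(s)$ with $(g,u)\neq 0$ — but such a $g$ would satisfy $H_u^2 g = s^2 g + (g,u)u$, and pairing with any $f\in E_H(s)$ (using $u\perp E_H(s)$ and self-adjointness of $H_u^2$) forces $s^2(g,f)=s^2(g,f)$, which is no contradiction, so instead I pair with $g$ itself: $s^2\|g\|^2 = (H_u^2 g,g) = s^2\|g\|^2 + |(g,u)|^2$ only if $g\in E_H(s)$, otherwise I decompose $g$ along the eigenspaces of $H_u^2$ and use $u\perp E_H(s)$ to get a genuine contradiction with $g$ being an $s^2$-eigenvector of $K_u^2$. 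Hence in this case $E_H(s)=E_K(s)$, and since $s$ is a singular value this common space is nonzero, so $u\perp E_K(s)$ as well — this is the degenerate case, and I would have to check it is consistent with the statement: the proposition as phrased asserts exactly one of (1),(2) holds, so I must argue this case cannot actually occur, i.e. that whenever $s$ is a singular value, $u$ is \emph{not} orthogonal to $E_H(s)+E_K(s)$. This follows from \eqref{a5}: if $u\perp E_H(s)$ and $u\perp E_K(s)$, then on the nonzero common space $E:=E_H(s)=E_K(s)$ the perturbation $(\cdot,u)u$ vanishes, which is fine, but then one uses that $u$ generates $\calH^2(\bbT)$ under $H_u^2$ in an appropriate sense — more robustly, one invokes that $\Clos\Span\{H_u^{2n}u : n\ge 0\}$ reduces $H_u$ and its orthogonal complement is $\Ker H_u \cap \Ker K_u$, forcing any nonzero eigenspace to meet it, hence to be non-orthogonal to $u$.

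The main obstacle is this last point — showing the two cases are mutually exclusive and exhaustive, i.e. that $u$ cannot be orthogonal to both $E_H(s)$ and $E_K(s)$ when $s$ is a singular value. Once that is established, the rest is bookkeeping: if $u\not\perp E_H(s)$, then $u\perp E_K(s)$ cannot also fail (else we are in neither case), so $E_K(s)\subseteq u^\perp$, giving $E_K(s)=E_K(s)\cap u^\perp\subseteq E_H(s)$, while the reverse-direction computation above gives $E_H(s)\cap u^\perp\subseteq E_K(s)$; combining, $E_K(s)=E_H(s)\cap u^\perp$, which is (1). The symmetric argument starting from $u\not\perp E_K(s)$ gives (2). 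Since this is quoted from \cite[Lemma 2.1]{A}, I would in practice simply cite it; but the self-contained argument sketched here rests entirely on the rank-one identity \eqref{a5} and the self-adjointness of $H_u^2$ and $K_u^2$.
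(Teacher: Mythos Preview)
The paper does not prove this proposition; it is quoted from \cite[Lemma~2.1]{A}, so there is no in-paper proof to compare against.

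Your overall strategy is sound: the rank-one identity \eqref{a5} immediately gives $E_H(s)\cap u^\perp = E_K(s)\cap u^\perp$, and mutual exclusivity of (1) and (2) follows easily (if $u\not\perp E_H(s)$, choose $f\in E_H(s)$ with $(u,f)\neq 0$; then for any $g\in E_K(s)$ one has $(g,u)(u,f) = ((H_u^2-s^2)g,f) = (g,(H_u^2-s^2)f)=0$, whence $(g,u)=0$). Your phrasing of this step (``else we are in neither case'') is garbled, but the underlying fact is elementary.

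The genuine gap is the degenerate case where $u\perp E_H(s)$ and $u\perp E_K(s)$ simultaneously. Your proposed argument --- that $u$ is cyclic for $H_u^2$, i.e.\ that the orthogonal complement of $\Clos\Span\{H_u^{2n}u:n\geq 0\}$ equals $\Ker H_u$ --- is false in general. If some eigenspace $E_H(s')$ has dimension $\geq 2$ (which is precisely the regime this paper treats, with possibly infinite multiplicities), then $H_u^2$ acts as the scalar $s'^2$ there, so the $H_u^2$-cyclic subspace generated by the projection of $u$ onto $E_H(s')$ is at most one-dimensional and cannot exhaust $E_H(s')$. The rank-one identity and self-adjointness alone therefore cannot rule out the degenerate case; the Hankel structure is essential, contrary to your closing claim.

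The correct argument is the content of Lemma~\ref{general} later in this paper. In the degenerate case $E:=E_H(s)=E_K(s)$ is nonzero, $u\perp E$, and since $H_u^2=s^2I$ on $E$ with $s>0$, the map $H_u:E\to E$ is onto; also $K_u(E)\subset E$. For $h\in E$, write $h=H_uh'$ with $h'\in E$; then $S^*h=S^*H_uh'=K_uh'\in E$ and $(h,\1)=(H_uh',\1)=(H_u\1,h')=(u,h')=0$ by \eqref{n10}. Thus $E$ is closed, $S^*$-invariant, and orthogonal to $\1$, forcing $E=\{0\}$. This is where the shift relation \eqref{a4a} and the identity $u=H_u\1$ enter; a purely spectral argument based on \eqref{a5} cannot close this gap.
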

In case (1) above, we will say that 
the singular value $s$ is \emph{$H$-dominant};
in case (2) we will say that $s$ is 
\emph{$K$-dominant}.

\subsection{ The Schmidt subspaces of $H_u$ and $K_u$}\label{sec.a6}

The following theorem  describes the Schmidt subspaces of both $H_u$ and $K_u$. 
The two cases below correspond to the two cases in Proposition~\ref{lma.z1}. 
We denote by $\1$ the function on $\calH^2$ identically equal to $1$.

\begin{theorem}\label{thm.z1}\cite[Theorem~2.2]{A}
Let $u\in\BMOA(\bbT)$ and let the anti-linear Hankel operators $H_u$, $K_u$ be as defined above. 
Let $s>0$ be a singular value of either $H_u$ or $K_u$. 
\begin{enumerate}[\rm (1)]
\item
Let $s$ be $H$-dominant, and let $P_s$ be the orthogonal projection onto $E_H(s)$. 
Then \eqref{z0}, \eqref{z00} hold with some
inner function $\theta=\psi_s$ and with $p=P_s\1/\norm{P_s\1}$: 
\[
E_H(s)=p\Ran H_{\psi_s}, \quad H_uT_p=sT_pH_{\psi_s} \text{ on $\Ran H_{\psi_s}$,} \quad p=\frac{P_s\1}{\norm{P_s\1}}.
\label{a5a}
\]
Furthermore, $\psi_s$ is uniquely defined by these conditions. 
\item
Let $s$ be $K$-dominant, and let $\wt P_s$ be the orthogonal projection onto $E_K(s)$. 
Then for an inner function $\wt\psi_s$ we have
\[
E_K(s)=p\Ran H_{\wt\psi_s}, \quad K_uT_{p}=sT_{p}H_{\wt \psi_s} \text{ on $\Ran H_{\wt\psi_s}$,} \quad p=\frac{\wt P_s u}{\norm{\wt P_s u}}.
\label{a5b}
\]
Furthermore, $\wt\psi_s$ is uniquely defined by these conditions. 
\end{enumerate}
\end{theorem}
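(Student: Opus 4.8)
\medskip
\noindent\textit{The plan.} I would derive both parts from the general structure theorem \eqref{z0}--\eqref{z00}, which holds for \emph{every} bounded Hankel operator, by fixing the phase of the isometric multiplier and the associated inner function through testing against $\1$ in case (1) and against the symbol $u$ in case (2).

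\smallskip
\noindent\textit{Case (1).} Apply \eqref{z0}--\eqref{z00} to $H_u$ at $s$: there are an inner $\theta$ and an isometric multiplier $q\in\calH^2$ on $\Ran H_\theta$ with $E_H(s)=T_q\Ran H_\theta$ and $H_uT_q=sT_qH_\theta$. Since $\1\in\Ran H_\theta$ by \eqref{n11} and $\norm{\1}=1$, the vector $q=T_q\1$ lies in $E_H(s)$ with $\norm{q}=1$ and $H_uq=sT_qH_\theta\1=sq\theta$. By \eqref{n10}, $H_u$ leaves $E_H(s)$ invariant (as $H_uH_u^2=H_u^2H_u$) and hence also $E_H(s)^\perp$, so it commutes with the orthogonal projection $P_s$ onto $E_H(s)$; in particular $P_su=P_sH_u\1=H_uP_s\1$. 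For $g=qf\in E_H(s)$ with $f\in\Ran H_\theta$, point evaluation factors through the multiplier as $g(0)=q(0)f(0)$, while the isometry of $T_q$ gives $(q,g)=(\1,f)_{\calH^2}=\overline{f(0)}$, so
\[
(P_s\1,g)=(\1,g)=\overline{g(0)}=\overline{q(0)}\,(q,g)=\bigl(\overline{q(0)}\,q,g\bigr)\qquad (g\in E_H(s)),
\]
hence $P_s\1=\overline{q(0)}\,q$. Now $H$-dominance forces $q(0)\neq0$, because $P_su=H_uP_s\1=q(0)\,sq\theta$ is nonzero exactly when $q(0)\neq0$. Thus $P_s\1\neq0$, $\norm{P_s\1}=\abs{q(0)}$, and $p:=P_s\1/\norm{P_s\1}=cq$ with $c:=\overline{q(0)}/\abs{q(0)}$ unimodular. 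Since a unimodular constant changes neither $\Ran T_q$ nor the model space, $E_H(s)=T_p\Ran H_{\psi_s}$ with $\psi_s:=\overline{c}^{\,2}\theta$ again inner, and, using $p=cq$ and anti-linearity of $H_u$, for $f\in\Ran H_{\psi_s}$,
\[
H_uT_pf=\overline{c}\,H_u(qf)=\overline{c}\,sq\theta\overline{f}=\overline{c}^{\,2}sp\theta\overline{f}=spH_{\psi_s}f=sT_pH_{\psi_s}f,
\]
which is \eqref{a5a}. For uniqueness of $\psi_s$: if $E_H(s)=T_p\Ran H_\psi$ and $H_uT_p=sT_pH_\psi$ hold for this $p$ and an inner $\psi$, then evaluating the second identity at $\1$ gives $H_up=sp\psi$, and likewise $H_up=sp\psi_s$; since $p\in\calH^2\setminus\{0\}$ has nonzero boundary values a.e.\ on $\bbT$, $\psi=\psi_s$.

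\smallskip
\noindent\textit{Case (2).} I would run the same scheme for the Hankel operator $K_u=H_{S^*u}$: \eqref{z0}--\eqref{z00} give $E_K(s)=T_q\Ran H_\theta$, $K_uT_q=sT_qH_\theta$, $q=T_q\1$, $\norm{q}=1$, and $K_u$ commutes with $\wt P_s$. Now $u$ takes over the role of $\1$, so the point is to prove that $\wt P_su$ is a unimodular multiple of $q$; after that the phase adjustment and the uniqueness argument are identical. Here I would use the identity $K_u=S^*H_u$ from \eqref{a4a}: for $g=qf\in E_K(s)$ one has $S^*H_ug=K_ug=sq\theta\overline{f}$, so $H_ug=\lambda_f\1+sz\,q\theta\overline{f}$ with $\lambda_f:=\widehat{H_ug}(0)=(u,qf)$. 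Expanding the anti-symmetry relation $(H_u(qf),q\theta\overline{h})=(H_u(q\theta\overline{h}),qf)$ for $f,h\in\Ran H_\theta$ --- noting $\theta\overline{h}=H_\theta h\in\Ran H_\theta$ and $\theta\,\overline{\theta\overline{h}}=h$ on $\bbT$, so that $H_u(q\theta\overline{h})=\lambda_{\theta\overline{h}}\1+sz\,qh$ --- the two terms carrying the factor $z$ contribute the same quantity $\tfrac{1}{2\pi}\int_{\bbT}z\abs{q}^2h\overline{f}\,\abs{dz}$ on each side and cancel, leaving
\[
\overline{q(0)}\,\bigl[\lambda_f\,(h,\theta)-\lambda_{\theta\overline{h}}\,\overline{f(0)}\bigr]=0 .
\]
Granting $q(0)\neq0$, setting $f=\1$ yields $\lambda_{\theta\overline{h}}=(u,q)(h,\theta)$, and then $h=\theta$ yields $\lambda_f=(u,q)\overline{f(0)}$ for all $f$; hence $(u,g)=(u,q)(q,g)$ for all $g\in E_K(s)$, i.e.\ $\wt P_su=(u,q)\,q$, which gives \eqref{a5b} as in case (1).

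\smallskip
\noindent\textit{Main obstacle.} The step I expect to cost the most work is precisely this identification of $\wt P_su$ with the multiplier. In case (1) it is essentially immediate from the reproducing-kernel factorisation $g(0)=q(0)f(0)$; in case (2) it genuinely requires the interplay $K_u=S^*H_u$ (equivalently the rank-one identity \eqref{a5}), and the degenerate subcase $q(0)=0$, in which $\1$ and $S^*u$ are both orthogonal to $E_K(s)$, has to be ruled out in the $K$-dominant regime (e.g.\ via Proposition~\ref{lma.z1}) or treated by a separate argument.
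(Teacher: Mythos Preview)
This theorem is quoted from \cite{A} and is not proved in the present paper, so there is no in-paper argument to compare against; I evaluate your sketch on its own merits.

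Your case (1) is correct. The identification $P_s\1=\overline{q(0)}\,q$ follows cleanly from the fact that $\1$ is the reproducing kernel at $0$ in $\Ran H_\theta=\calH^2\ominus z\theta\calH^2$, and $H$-dominance forces $q(0)\neq0$ via $P_su=H_uP_s\1=q(0)\,sq\theta$. The unimodular phase adjustment $\psi_s=\overline{c}^{\,2}\theta$ and the uniqueness argument are handled correctly.

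Case (2) has a real gap, and it is exactly the one you flag. Your symmetry computation produces only
\[
\overline{q(0)}\bigl[\lambda_f(h,\theta)-\lambda_{\theta\overline{h}}\,\overline{f(0)}\bigr]=0,
\]
which is vacuous when $q(0)=0$. You hope to exclude this via Proposition~\ref{lma.z1} or the $K$-dominance hypothesis, but neither suffices: $K$-dominance says $u\not\perp E_K(s)$, whereas $q(0)=0$ is equivalent (by your own case (1) reasoning applied to $K_u$) to $\1\perp E_K(s)$, hence to $S^*u\perp E_K(s)$; these two orthogonality conditions are a priori independent, and Proposition~\ref{lma.z1} for $K_u=H_{S^*u}$ speaks about $S^*u$, not $u$. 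The missing ingredient, established in \cite{A} as part of the structure theorem itself rather than as a consequence of it, is that for \emph{every} bounded Hankel operator $H_v$ and every $s>0$ with $E_{H_v}(s)\neq\{0\}$ one has $\1\not\perp E_{H_v}(s)$. Applied to $H_v=K_u$ this gives $q(0)\neq0$ unconditionally, after which your argument goes through verbatim and indeed shows $\wt P_s u=(u,q)q$ is proportional to $\wt P_s\1=\overline{q(0)}q$. So the strategy is sound, but the ``separate argument'' you anticipate is not a technicality one can dismiss --- it is a substantial lemma whose proof lives in \cite{A}.
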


\subsection{Finite spectrum: direct spectral problem}\label{sec.a7}
In this paper we 
focus on the case when $H_u^2$ has finite spectrum. 
By the spectral theorem for self-adjoint operators, this spectrum is made of eigenvalues. 
Furthermore, by the identity  \eqref{a5}, 
the spectrum of $K_u^2$ is finite as well. 
In this subsection, we explain how to divide the corresponding singular values into two groups 
($H$-dominant and $K$-dominant) and state an important interlacing property for them.

We first note that
for any Hankel operator $H_u$, zero is in the spectrum
of $H_u^2$ and $\dim\Ker H_u$ is either zero or infinity; 
this is well known and is a consequence of the commutation relation 
\eqref{a4a} and of Beurling's theorem. 
Thus, in the finite spectrum case both $\Ker H_u$ and $\Ker K_u$ 
are infinite dimensional. 
Also, since $0$ is an isolated point in the spectrum of $H_u^2$, 
the range $\Ran H_u=\Ran H_u^2$ is closed.

Observe that by \eqref{a5}, we have $\Ker H_u\subset \Ker K_u$ and therefore (by \eqref{n9})
$\Ran K_u\subset \Ran H_u$. 
It follows that the orthogonal projections onto $\Ran K_u$ and $\Ran H_u$ commute;
in particular, $\Ran H_u$ is an invariant subspace
both for $H_u$ and $K_u$. 
It will be convenient to consider $H_u$ and $K_u$ restricted to this invariant subspace; 
this point of view eliminates the kernel of $H_u$, but
distinguishes between the cases $\Ker H_u=\Ker K_u$ and $\Ker H_u\subsetneq\Ker K_u$. 
In accordance with this, we 
extend the definition of $E_H(s)$ and $E_K(s)$ to 
$s=0$ by setting 
\[
E_H(0)=\{0\}, \quad E_K(0)=\Ker K_u\cap\Ran H_u.
\label{d0}
\]
Next, we define the sets of $H$-dominant and $K$-dominant singular values by 
\begin{align*}
\Sigma_H(u)&=\{s>0: u\not\perp E_H(s)\},
\\
\Sigma_K(u)&=\{s\geq0: u\not\perp E_K(s)\}.
\end{align*}
We note that $0\in\Sigma_K(u)$ if and only if $u\not\perp \Ker K_u$. 

By Proposition~\ref{lma.z1}, the sets $\Sigma_H(u)$ and $\Sigma_K(u)$ 
are disjoint and 
$$
\sigma(\abs{H_u})\cup \sigma(\abs{K_u})\subset
\Sigma_H(u)\cup\Sigma_K(u)\cup\{0\}. 
$$
The following set of properties was first observed in \cite{GG1} in the case of compact $H_u$, $K_u$. 
\begin{theorem}\label{lma.d1}
Assume that $u\in\BMOA(\bbT)$ is not identically zero and that the spectrum of $H_u^2$ is finite. Then:
\begin{enumerate}[\rm (i)]
\item
For some $N\in\bbN$, we have
$$
\Sigma_H(u)=\{\sv_1,\dots,\sv_N\}, \quad 
\Sigma_K(u)=\{\svt_1,\dots,\svt_N\}, 
$$
where the elements $\sv_j$, $\svt_k$ can be enumerated so that 
\[
\sv_1>\svt_1>\sv_2>\svt_2>\cdots>\sv_N>\svt_N\geq0.
\label{d1}
\]
\item
Denote by $u_j$ the orthogonal projection of $u$ onto $E_H(\sv_j)$ and by $\wt u_k$ the 
orthogonal projection of $u$ onto $E_K(\svt_k)$. 
Then for every $1\leq j,k\leq N$, 
\begin{align}
u_j&=\norm{u_j}^2\sum_{k=1}^N \frac{\wt u_k}{\sv_j^2-\svt_k^2},
\label{d2a}
\\
\wt u_k&=\norm{\wt u_k}^2\sum_{j=1}^N \frac{u_j}{\sv_j^2-\svt_k^2}.
\label{d2b}
\end{align}
\item
We have
\begin{align}
\Sigma_K(u)&=\{\svt\geq0,\  \svt\notin\Sigma_H(u): \sum_{j=1}^N \frac{\norm{u_j}^2}{\sv_j^2-\svt^2}=1\},
\label{d3}
\\
\Sigma_H(u)&=\{\sv>0, \  \sv\notin\Sigma_K(u): \sum_{k=1}^N \frac{\norm{\wt u_k}^2}{\sv^2-\svt_k^2}=1\}.
\label{d4}
\end{align}
\end{enumerate}
\end{theorem}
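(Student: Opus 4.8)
The proof will be based on reading the identity \eqref{a5} as a rank-one perturbation. First I would pass to the closed invariant subspace $\mathcal R:=\Ran H_u=\Ran H_u^2$ and set $A:=H_u^2|_{\mathcal R}$, $B:=K_u^2|_{\mathcal R}$; since $u=H_u\1\in\mathcal R$, \eqref{a5} becomes $B=A-(\cdot,u)u$ on $\mathcal R$, where $A$ is invertible and $A,B\geq0$. Everything will be governed by the rational function $F(\lambda):=\bigl((A-\lambda)^{-1}u,u\bigr)=\sum_{j}\norm{u_j}^2/(\sv_j^2-\lambda)$, with $u_j:=P_{\sv_j}u$. A preliminary step is to note, via Proposition~\ref{lma.z1}, that every positive singular value of $H_u$ or $K_u$ is $H$- or $K$-dominant — a ``neither''-value would have $E_H(s)=E_K(s)\perp u$, incompatible with the dichotomy — so $\sigma(A)\cap(0,\infty)=\{\sv_j^2:\sv_j\in\Sigma_H(u)\}$; since $\sigma(H_u^2)$ is finite and $u\not\equiv0$, the set $\Sigma_H(u)=\{\sv_1>\dots>\sv_N\}$ is then finite and nonempty with every $\norm{u_j}>0$.

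The main tool is the Aronszajn--Krein rank-one perturbation formula: for $\lambda\notin\sigma(A)$, $B-\lambda$ is non-invertible precisely when $F(\lambda)=1$, and then $\Ker(B-\lambda)=\bbC(A-\lambda)^{-1}u$ is one-dimensional with $u$ not orthogonal to it. Combined with the disjointness of $\Sigma_H(u)$ and $\Sigma_K(u)$ and a direct check that $0\in\Sigma_K(u)$ iff $F(0)=1$ (again using that $A$ is invertible on $\mathcal R$), this identifies $\Sigma_K(u)$ with the set of $\svt\geq0$ solving $F(\svt^2)=1$ — which is \eqref{d3} — and, for each such $\svt$, gives $E_K(\svt)=\bbC(A-\svt^2)^{-1}u$. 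To obtain (i) I would analyse $F=1$ on $\bbR$: $F$ is strictly increasing between consecutive poles $\sv_j^2$ and $F(\lambda)\to0^-$ as $\lambda\to+\infty$, so $F=1$ has exactly one root in each of the $N-1$ gaps $(\sv_{j+1}^2,\sv_j^2)$, all positive, and exactly one more root in $(-\infty,\sv_N^2)$. Positivity of $B=K_u^2$ gives $F(0)=\norm{A^{-1/2}u}^2\leq1$, which places the last root at some $\svt_N^2\geq0$, with $\svt_N=0$ exactly when $F(0)=1$; this yields $\abs{\Sigma_K(u)}=N$ and the interlacing \eqref{d1}.

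For part (ii): since $\svt_k^2\notin\sigma(A)$ for all $k$ (because $\svt_k\notin\{\sv_1,\dots,\sv_N\}$, and $0\notin\sigma(A|_{\mathcal R})$ if $\svt_N=0$) and $\bigl(u,(A-\svt_k^2)^{-1}u\bigr)=F(\svt_k^2)=1$, projecting $u$ onto the line $E_K(\svt_k)$ gives $\wt u_k=\norm{\wt u_k}^2(A-\svt_k^2)^{-1}u$; expanding the resolvent over the $E_H(\sv_j)$ then yields \eqref{d2b}. For \eqref{d2a}, applying $B-\sv_j^2=A-\sv_j^2-(\cdot,u)u$ to $u_j\in E_H(\sv_j)$ and using $(u_j,u)=\norm{u_j}^2$ gives $(B-\sv_j^2)u_j=-\norm{u_j}^2u$; by Proposition~\ref{lma.z1} and self-adjointness of $P_{\sv_j}$, both $u$ and $u_j$ lie in the reducing subspace $\mathcal R\ominus E_K(\sv_j)$ of $B$, on which $B-\sv_j^2$ is boundedly invertible, so $u_j=-\norm{u_j}^2(B-\sv_j^2)^{-1}u$; expanding $u=\sum_k\wt u_k$ over the $E_K(\svt_k)$ — which by part (i) carry all of $u$ — gives \eqref{d2a}.

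Finally, for \eqref{d4}, I would take the inner product of \eqref{d2a} with $u$ and use $(\wt u_k,u)=\norm{\wt u_k}^2$ to get $\sum_k\norm{\wt u_k}^2/(\sv_j^2-\svt_k^2)=1$ for every $j$, i.e.\ $G(\sv_j^2)=-1$, where $G(\lambda):=\sum_k\norm{\wt u_k}^2/(\svt_k^2-\lambda)$. Repeating for $G$ the same monotonicity-and-asymptotics analysis used for $F$ shows that $G=-1$ has exactly $N$ real roots, all positive and interlacing the poles $\svt_k^2$; since the $N$ distinct numbers $\sv_j^2$ are all among them, they exhaust the root set, which is precisely \eqref{d4}. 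I expect the main obstacle to be the bookkeeping around the possibly infinite multiplicities of the eigenspaces: one must verify that, although the spaces $E_H(\sv_j)$ may be infinite-dimensional, the $K$-dominant spaces $E_K(\svt_k)$ are always one-dimensional, and apply the rank-one formula after compressing to $\mathcal R\ominus E_K(\sv_j)$ when $\sv_j^2\in\sigma(B)$. The other delicate point is the bound $F(0)\leq1$ coming from $K_u^2\geq0$: it is exactly what forces $\abs{\Sigma_K(u)}=N$ rather than $N-1$, and it discriminates between the cases $\Ker H_u=\Ker K_u$ and $\Ker H_u\subsetneq\Ker K_u$.
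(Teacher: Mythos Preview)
Your rank-one perturbation framing is natural and, in fact, closely parallels the paper's own argument: the paper also builds the candidate eigenvector $h=\sum_j u_j/(\sv_j^2-\svt^2)=(A-\svt^2)^{-1}u$ and checks $K_u^2h=\svt^2h$ directly, and its bound $\sum_j\norm{u_j}^2/\sv_j^2=\sum_j\norm{\1_{s_j}}^2\leq1$ is equivalent to your $F(0)\leq1$ from $K_u^2\geq0$. The Aronszajn--Krein packaging is a bit more conceptual but the computations coincide.

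There is, however, a genuine gap. The deduction ``so $\sigma(A)\cap(0,\infty)=\{\sv_j^2:\sv_j\in\Sigma_H(u)\}$'' is wrong, and correspondingly your expectation that the $K$-dominant spaces $E_K(\svt_k)$ are always one-dimensional is false. By Proposition~\ref{lma.z1}, a $K$-dominant value $\svt_k$ has $E_H(\svt_k)=E_K(\svt_k)\cap u^\perp$, and nothing forces this to be $\{0\}$: in the language of Theorem~\ref{inverse}, whenever the inner function $\wt\psi_k$ is not a unimodular constant one has $\dim E_K(\svt_k)=\dim\Ran H_{\wt\psi_k}\geq2$, so $\svt_k^2\in\sigma(A)$. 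At such $\svt_k^2$ the bare Aronszajn--Krein formula (which needs $\lambda\notin\sigma(A)$) does not apply, and $(A-\svt_k^2)^{-1}u$ is not defined as written. The dichotomy in Proposition~\ref{lma.z1} only gives $\sigma(A)\cap(0,\infty)\subset\Sigma_H(u)\cup\Sigma_K(u)$, not the equality you claim.

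The fix is not hard once the issue is seen. Since $\svt_k$ is $K$-dominant, $u\perp E_H(\svt_k)$, so $u$ lies in $\Ran(A-\svt_k^2)$ and the formula $\wt u_k=\norm{\wt u_k}^2(A-\svt_k^2)^{-1}u$ still holds with the inverse interpreted on $E_H(\svt_k)^\perp$; equivalently, compress $A$ and $B$ to $\mathcal R\ominus E_H(\svt_k)$ before invoking Aronszajn--Krein (this is exactly what the paper does implicitly when it expands $\wt u_k$ over the $E_H$-decomposition and checks that all components along $E_H(\svt_\ell)$ vanish). Your argument for \eqref{d2a} already handles the analogous difficulty on the $K$-side correctly, by compressing to $\mathcal R\ominus E_K(\sv_j)$; the same care is needed on the $H$-side. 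But the plan to ``verify that $E_K(\svt_k)$ is one-dimensional'' would lead you into a dead end.
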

The proof is given in Section~\ref{sec.B}. 

\subsection{Main result: inverse spectral problem}\label{sec.a7a}

We introduce the spectral data
\[
\Lambda(u)=\bigl(\{\sv_j\}_{j=1}^N, \{\svt_k\}_{k=1}^N, \{\psi_j\}_{j=1}^N, \{\wt \psi_k\}_{k=1}^N\bigr),
\label{z2a}
\]
where $\psi_j=\psi_{\sv_j}$, $\wt \psi_k=\wt \psi_{\svt_k}$ are the inner 
functions from Theorem~\ref{thm.z1}. 
If $\wt s_N=0$, then
$\wt\psi_N$ does not enter spectral data (in all expressions below, 
it will appear in combinations $\wt s_N\wt\psi_N$).

Our main result below shows that 
the map 
$u\mapsto \Lambda (u)$
is a bijection with an explicit inverse. 
Below $\bbD$ is the unit disk in the complex plane.

\begin{theorem}\label{inverse}
\begin{enumerate}[\rm (i)]
\item
Let $u\in \BMOA(\bbT )$, $u\not\equiv0$, be such that the spectrum of $H_u^2$ is finite. 
Then $u$ is uniquely determined by the spectral data $\Lambda(u)$ 
according to the following formulas. 
Let $u_j$, $\wt u_k$ be as in Theorem~\ref{lma.d1} and let 
$h_j=\sv_j^{-1}H_u u_j$. 
For every $z\in \bbD$, consider the $N\times N$ matrix
\[
\scrC(z)=\left \{\frac{\sv_j -z\svt _k\psi_j(z)\wt \psi_k(z)}{\sv_j^2-\svt_k^2}\right \}_{1\leq j,k\leq N}\ .
\label{d7a}
\]
Then 
\begin{gather}
\bh(z)=(\scrC(z)^\top)^{-1}\bone, 
\quad \bh=(h_1, \dots, h_N)^\top, \quad \bone=(1,\dots, 1)^\top, 
\label{d7b}
\\
\wt \bu(z)= \scrC(z)^{-1}\bpsi(z), 
\quad \wt \bu=(\wt u_1, \dots, \wt u_N)^\top, \quad \bpsi=(\psi_1, \dots, \psi_N)^\top. 
\label{d7c}
\end{gather}
In particular, 
\[
u(z)= \jap{\scrC(z)^{-1}\bpsi(z), \bone}_{\bbC^N}. 
\label{d7}
\]
\item
Conversely, for $N\in \bbN$,  let $\{\sv_j\}_{j=1}^N$, $\{\svt_k\}_{k=1}^N$ be real numbers satisfying the interlacing
condition \eqref{d1}, 
and let $\{\psi_j\}_{j=1}^N$ and $\{\wt\psi_k\}_{k=1}^N$ be arbitrary inner functions.
Then there exists a unique symbol $u\in \BMOA(\bbT)$ with the spectral data 
$$
\Lambda(u)=\bigl(\{\sv_j\}_{j=1}^N, \{\svt_k\}_{k=1}^N, \{\psi_j\}_{j=1}^N, \{\wt \psi_k\}_{k=1}^N\bigr).
$$
In fact, the symbol $u(z)$ is a bounded rational function of $z\in\bbD$, $\psi_j(z)$, $\wt\psi_k(z)$, 
given by \eqref{d7}. 
\end{enumerate}
\end{theorem}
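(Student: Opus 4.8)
The plan is to reduce both parts of the theorem to the two matrix identities $\scrC(z)^\top\bh(z)=\bone$ and $\scrC(z)\wt\bu(z)=\bpsi(z)$, valid for $z\in\bbD$, together with the invertibility of the ``complex Cauchy matrix'' $\scrC(z)$: part (i) derives these identities from the operator-theoretic side, while part (ii) imposes them and reads the spectral data off.

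\emph{Part (i).} Recall from Theorem~\ref{lma.d1} that, inside $\Ran H_u=(\Ker H_u)^\perp$, one has the orthogonal decompositions $u=\sum_{j=1}^N u_j=\sum_{k=1}^N\wt u_k$. The first step is to turn the model-space descriptions of Theorem~\ref{thm.z1} into pointwise identities for functions. Since $\1$ and $\psi_j$ both lie in $\Ran H_{\psi_j}$, applying the relation $H_uT_p=\sv_j T_pH_{\psi_j}$ of \eqref{a5a} (with $p=P_{\sv_j}\1/\norm{P_{\sv_j}\1}$) to these two functions and unwinding the normalisation, together with the definition $h_j=\sv_j^{-1}H_uu_j$, yields the factorisation $u_j=h_j\psi_j$ as well as $H_uu_j=\sv_jh_j$ and $H_uh_j=\sv_ju_j$; the same argument applied to \eqref{a5b} gives $K_u\wt u_k=\svt_k\wt u_k\wt\psi_k$ (for $\svt_N=0$ this reads $K_u\wt u_N=0$, consistent with $\wt u_N\in\Ker K_u$). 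Taking zeroth Fourier coefficients and using the symmetry \eqref{n10} produces two scalar identities, $h_j(0)=\norm{u_j}^2/\sv_j$ and $(H_u\wt u_k)(0)=\norm{\wt u_k}^2$. I would then combine these with \eqref{d2a}, \eqref{d2b} and the commutation $K_u=S^*H_u$ from \eqref{a4a}. Applying $H_u$ to \eqref{d2b} gives $H_u\wt u_k=\norm{\wt u_k}^2\sum_j\sv_jh_j/(\sv_j^2-\svt_k^2)$; now apply $S^*$ and then $S$, using $S^*H_u=K_u$ and $K_u\wt u_k=\svt_k\wt u_k\wt\psi_k$ on the left and $(H_u\wt u_k)(0)=\norm{\wt u_k}^2$ on the right, to obtain
\[
z\svt_k\wt\psi_k(z)\,\wt u_k(z)=\norm{\wt u_k}^2\Bigl(\sum_{j}\frac{\sv_jh_j(z)}{\sv_j^2-\svt_k^2}-1\Bigr).
\]
Comparing this with \eqref{d2b} rewritten (via $u_j=h_j\psi_j$) as $\wt u_k(z)=\norm{\wt u_k}^2\sum_j h_j(z)\psi_j(z)/(\sv_j^2-\svt_k^2)$ eliminates $\wt u_k(z)$ and gives $\sum_j\scrC(z)_{jk}h_j(z)=1$, i.e. $\scrC(z)^\top\bh(z)=\bone$. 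Symmetrically, $K_u=S^*H_u$, $H_uu_j=\sv_jh_j$ and $h_j(0)=\norm{u_j}^2/\sv_j$ give $zK_uu_j=\sv_jh_j-\norm{u_j}^2\1$; expanding $K_uu_j$ via \eqref{d2a} and $K_u\wt u_k=\svt_k\wt u_k\wt\psi_k$ then produces $\sum_k\scrC(z)_{jk}\wt u_k(z)=\psi_j(z)$, i.e. $\scrC(z)\wt\bu(z)=\bpsi(z)$. Granting invertibility of $\scrC(z)$ (see below), these are \eqref{d7b} and \eqref{d7c}, and \eqref{d7} follows from $u=\sum_k\wt u_k=\jap{\wt\bu,\bone}$.

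\emph{Part (ii).} Given the data, define $\scrC(z)$ by \eqref{d7a} and set $\wt\bu(z)=\scrC(z)^{-1}\bpsi(z)$, $\bh(z)=(\scrC(z)^\top)^{-1}\bone$, $u_j:=h_j\psi_j$ and $u(z):=\jap{\wt\bu(z),\bone}$. Invertibility of $\scrC(z)$ with a uniform bound on $\scrC(z)^{-1}$ over $\bbD$ makes each of $h_j,\wt u_k,u_j,u$ a bounded analytic function of $z$, rational in $z,\psi_j(z),\wt\psi_k(z)$, hence $u\in\BMOA(\bbT)$; and $\sum_j u_j=\sum_j h_j(\scrC\wt\bu)_j=\sum_k(\scrC^\top\bh)_k\wt u_k=\sum_k\wt u_k=u$ by construction. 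It remains to verify that $H_u$ carries the prescribed spectral data. For this I would compute $H_uf=P(u\overline f)$ on $h_j$, $u_j$, $\wt u_k$: writing out $u\,\overline{h_j}$, $u\,\overline{u_j}$ and $(S^*u)\,\overline{\wt u_k}$ on $\bbT$ and expanding in partial fractions in the variable $z\psi_j(z)\wt\psi_k(z)$, the non-analytic parts being killed by $P$, one recovers $H_uh_j=\sv_ju_j$, $H_uu_j=\sv_jh_j$ and $K_u\wt u_k=\svt_k\wt u_k\wt\psi_k$ — the identities $\scrC\wt\bu=\bpsi$ and $\scrC^\top\bh=\bone$ built into the construction being exactly the input that makes these cancellations close up. One then checks that $h_j$ and a scalar multiple of $\wt u_k$ are isometric multipliers on $\Ran H_{\psi_j}$ and $\Ran H_{\wt\psi_k}$, so that $h_j\Ran H_{\psi_j}\subset\Ker(H_u^2-\sv_j^2I)$ and $\wt u_k\Ran H_{\wt\psi_k}\subset\Ker(K_u^2-\svt_k^2I)$; since the span of the $h_j\Ran H_{\psi_j}$ equals that of the $\wt u_k\Ran H_{\wt\psi_k}$, is closed, and coincides with $\Ran H_u=\Clos\Span\{S^{*n}u:n\geq0\}$ (equivalently, its orthogonal complement lies in $\Ker H_u$), the spectrum of $H_u^2$ is finite and those inclusions are equalities. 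Comparing with Theorem~\ref{lma.d1} and Proposition~\ref{lma.z1} then gives $\Sigma_H(u)=\{\sv_j\}$, $\Sigma_K(u)=\{\svt_k\}$, and the uniqueness clauses of Theorem~\ref{thm.z1} identify the inner functions as $\psi_j$, $\wt\psi_k$, so $\Lambda(u)$ is the prescribed data. Uniqueness of $u$ is immediate from part (i).

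\emph{The main obstacle.} The crux, which I would isolate as a separate lemma, is the invertibility of $\scrC(z)$ for every $z\in\bbD$, together with a positive lower bound on $\abs{\det\scrC(z)}$ over $\bbD$ (needed to keep $u$ bounded). At $z=0$ the matrix is the ordinary Cauchy matrix $\{\sv_j/(\sv_j^2-\svt_k^2)\}_{j,k}$, invertible precisely because of the strict interlacing \eqref{d1}; the difficulty is the inner-function ``twist'' $z\svt_k\psi_j(z)\wt\psi_k(z)$, each factor of which maps $\bbD$ contractively into $\bbD$. I expect to handle this by a direct computation: either an explicit Cauchy-type determinant identity exhibiting $\det\scrC(z)\neq0$ on $\bbD$, or by assuming $\scrC(z)\bv=0$, forming a rational function out of $\bv$ and the $\svt_k,\wt\psi_k(z)$, and forcing $\bv=0$ by comparing moduli on $\bbT$ and invoking \eqref{d1}. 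Once this lemma and the two matrix identities are available, the remaining verifications in part (ii) are a lengthy but routine partial-fraction computation.
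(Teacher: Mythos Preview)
Your outline of Part~(i) is essentially the paper's argument: derive $u_j=\psi_jh_j$ and $K_u\wt u_k=\svt_k\wt\psi_k\wt u_k$ from Theorem~\ref{thm.z1}, then apply $H_u$ or $SK_u$ to the decompositions \eqref{d2a}, \eqref{d2b} and collect terms to obtain $\scrC^\top\bh=\bone$ and $\scrC\wt\bu=\bpsi$. Your identification of the invertibility of $\scrC(z)$ as the main analytic obstacle is also correct, and your two suggested routes (determinant/contraction versus kernel argument on $\bbT$) are in fact both used in the paper, glued together by the maximum modulus principle.

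Part~(ii), however, has two genuine gaps. First, computing $H_u$ only on the single elements $h_j$, $u_j$, $\wt u_k$ gives $u_j\in E_H(\sv_j)$ and $\wt u_k\in E_K(\svt_k)$, but it does \emph{not} give the inclusions $h_j\Ran H_{\psi_j}\subset E_H(\sv_j)$ that you claim next. Your proposed bridge, ``check that $h_j$ is an isometric multiplier'', is both logically insufficient (isometry of $T_{h_j}$ says nothing about $H_u^2$ on its range) and not something one can verify directly from the formulas without already knowing the spectral picture. What is actually needed is the full model-space action $H_u(fh_j)=\sv_j\overline{f}\psi_jh_j$ for \emph{all} $f\in\Ran H_{\psi_j}$; the paper obtains this by first proving $P(u\overline{h_j})=\sv_j\psi_jh_j$ via the matrix identity (which you have), and then observing that for $f\in\Ran H_{\psi_j}$ one may pull $\overline f$ inside the projection because $\overline f\psi_j\in\calH^2$.

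Second, and more seriously, you assert without argument that the span of the $h_j\Ran H_{\psi_j}$ (equivalently of the $\wt u_k\Ran H_{\wt\psi_k}$) ``coincides with $\Ran H_u$''. This is the heart of the surjectivity proof and is not a routine verification. A priori, $H_u$ could have further spectrum outside $\{\sv_j,\svt_k\}$, or the inclusions $h_j\Ran H_{\psi_j}\subset E_H(\sv_j)$ could be strict. The paper closes this by a separate lemma: if a subspace $V\subset\calH^2$ satisfies $H_u(V)=V$, $K_u(V)\subset V$, and $V\perp u$, then $V=\{0\}$ (because then $S^*V\subset V$ and $V\perp\1$). Applying this to $V=W_\eps\cap W_H^\perp$, where $W_H=\bigoplus_j E_H(\sv_j)\oplus\bigoplus_k E_H(\svt_k)$ and $W_\eps=\Ran\chi_{(\eps,\infty)}(\abs{H_u})$, requires first showing $W_H=W_K$ (via Proposition~\ref{lma.z1} and the relations between $u_j$ and $\wt u_k$) and then checking the $K_u$-invariance of $V$ using \eqref{a5} and $V\perp u$. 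You also skip the verification that all $u_j$, $\wt u_k$ are nonzero, which the paper extracts from the Cauchy-matrix relations between them. Without these ingredients your Part~(ii) does not close.
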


In Section~\ref{sec.a8} we will see that the matrix $\scrC(z)$ is invertible for all $\abs{z}\leq 1$.

\subsection{Related results and alternative methods}

In \cite{MPT}, a general inverse spectral problem for bounded self-adjoint Hankel matrices $\Gamma=\{\gamma_{j+k}\}_{j,k=0}^\infty$ 
was solved. 
The spectral data was taken to be the spectral multiplicity function of $\Gamma$.  
This spectral data is incomplete (it does not uniquely define $\Gamma$), and no explicit
formulas for a solution to this problem was discussed. 
The main task of \cite{MPT} was to determine the class of possible spectral multiplicity functions. 

In \cite{GG1}, an inverse problem for \emph{compact} Hankel operators with $H_u^2$, $K_u^2$ 
having simple spectrum (when restricted to $\overline{\Ran H_u}$)
was solved. The spectral data was similar to \eqref{z2a}, with $\psi_j$, $\wt\psi_k$ reducing to unimodular constants in this case. 
An explicit formula for $u$ in terms of the spectral data was given, but it had a more complicated form than \eqref{d7}. 

In \cite{Poc}, a similar question was considered for \emph{compact} Hankel operators acting in the Hardy 
space on the real line. 

The paper \cite{GP} was devoted to the case of bounded non-compact positive self-adjoint Hankel operators.
In this case, the inner functions $\psi_j$, $\wt\psi_k$ in the spectral data degenerate (they are equal to $1$) and the 
sequences of singular values $\{s_j\}$, $\{\wt s_k\}$ must be replaced by certain spectral measures.

In \cite{GGAst}, the general case of \emph{compact} Hankel operators $H_u$, $K_u$ was solved. 
The spectral data was of the same type as considered in our paper. 
The inverse formula \eqref{d7} appeared in \cite{GGAst} for the first time. 
The proof of the crucial statement  of the surjectivity of the spectral map $u\mapsto\Lambda(u)$   in \cite{GGAst} 
was very difficult. It was a combination of the following components:
(i) reduction to a finite rank case by the use of the AAK theorem; 
(ii) the image of $\Lambda$ is both open and closed, the latter being obtained by a compactness argument;
(iii) for a given set of multiplicities of the singular values, the relevant set of $u$'s is non-empty; this was
achieved through an induction on multiplicity argument.

In \cite{G}, a different approach to the proof of surjectivity was presented in the case
of \emph{finite rank} $H_u$, $K_u$ with all singular values being simple. 
It was based on an adaptation of the asymptotic completeness argument of \cite{MPT}, 
taking into account the finite dimensionality of the problem. 

A direct proof of Theorem~\ref{inverse} in the very special case $N=1$ was provided as
an Example in \cite[Section 6]{A}.

\subsection{Some ideas of the proof}\label{sec.a8}
The proofs of Theorem~\ref{lma.d1} and part (i) of Theorem~\ref{inverse} follow closely 
the ideas of \cite{GGAst}, and present no fundamental new challenges. 
On the contrary, the proof of part (ii) of Theorem~\ref{inverse} is the main new feature of this paper. 
In our setting, $H_u$ and $K_u$ need not be compact and the 
multiplicities of the singular values may be infinite. Because of this, none of the methods mentioned above work for the proof of 
surjectivity.  
We suggest a new approach, which is more direct and algebraic in nature. 
For $u$ given by \eqref{d7}, we check directly that the operators $H_u$ and $K_u$ have the required
spectral data. A crucial component of this is the invertibility of the matrix $\scrC(z)$ for $z$ in the closed unit disk $\overline{\bbD}$. 
This depends on the following elementary statement about complex Cauchy matrices. 
\begin{theorem}\label{thm.z4}
Let $\{\sv_j\}_{j=1}^N$, $\{\svt_k\}_{k=1}^N$ be real numbers
satisfying the interlacing condition \eqref{d1}. Let $z, \zeta_1,\dots,\zeta_N, \wt \zeta_1, \dots, \wt \zeta_N$ 
be complex numbers in the closed unit disk $\overline{\bbD}$, and let
$\scrC(z;\zeta;\wt \zeta)$ be the $N\times N$ complex Cauchy matrix
\[
\scrC(z;\zeta;\wt\zeta)=
\left \{\frac{\sv _j-z\svt _k\zeta _j\wt \zeta _k}{\sv_j^2-\svt _k^2}\right\}_{1\leq j,k\leq N}\ .
\label{A0}
\]
Then $\scrC(z;\zeta;\wt\zeta)$
is invertible for all values of $z$, $\zeta_j$, $\wt \zeta_k$ in $\overline{\bbD}$, and the 
norm of the inverse is uniformly bounded:
$$
\norm{\scrC(z;\zeta;\wt\zeta)^{-1}}\leq C, \quad 
z, \zeta_j, \wt \zeta_k\in\overline{\bbD},
$$
where $C$ depends only on $\{\sv_j\}$, $\{\svt_k\}$. 
\end{theorem}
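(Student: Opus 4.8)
The plan is to reduce the invertibility of $\scrC(z;\zeta;\wt\zeta)$ to a positivity statement, exploiting the structure of the entry $\frac{\sv_j-z\svt_k\zeta_j\wt\zeta_k}{\sv_j^2-\svt_k^2}$. The key observation is that $\frac{1}{\sv_j^2-\svt_k^2}$ is, up to sign, the $(j,k)$ entry of a classical (real) Cauchy matrix, which is always invertible under the interlacing condition \eqref{d1}; moreover the numerator $\sv_j - z\svt_k\zeta_j\wt\zeta_k$ factors nicely. First I would write $\sv_j-z\svt_k\zeta_j\wt\zeta_k = \frac12\bigl[(\sv_j-\svt_k) + (\sv_j+\svt_k)\bigr] - \frac{z\zeta_j\wt\zeta_k}{2}\bigl[(\sv_j+\svt_k) - (\sv_j-\svt_k)\bigr]$, so that
\[
\frac{\sv_j-z\svt_k\zeta_j\wt\zeta_k}{\sv_j^2-\svt_k^2}
=
\frac{1-z\zeta_j\wt\zeta_k}{2}\cdot\frac{1}{\sv_j-\svt_k}
+
\frac{1+z\zeta_j\wt\zeta_k}{2}\cdot\frac{1}{\sv_j+\svt_k}.
\]
Thus $\scrC(z;\zeta;\wt\zeta) = D_\zeta^{(1)} A D_{\wt\zeta}^{(1)} + D_\zeta^{(2)} B D_{\wt\zeta}^{(2)}$ is a sum of two rank-structured pieces, where $A=\{\frac12(\sv_j-\svt_k)^{-1}\}$, $B=\{\frac12(\sv_j+\svt_k)^{-1}\}$ are real Cauchy matrices and the $D$'s are diagonal matrices built from $1\mp z\zeta_j\wt\zeta_k$ (strictly speaking one has to be slightly more careful since the factor $1\mp z\zeta_j\wt\zeta_k$ mixes $j$ and $k$, so this is really a Hadamard/Schur product identity rather than a clean diagonal conjugation; I would phrase it via the Schur product $\scrC = (\tfrac12(\bone\bone^\top - z\,\zeta\wt\zeta^\top))\circ A + (\tfrac12(\bone\bone^\top + z\,\zeta\wt\zeta^\top))\circ B$).

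The cleanest route to invertibility is a determinant computation. A complex Cauchy matrix of the form $\{\frac{a_j - b_k\, x_j y_k}{\sv_j^2-\svt_k^2}\}$ can be evaluated in closed form using the Cauchy determinant formula after appropriate row and column operations; I would aim to show
\[
\det\scrC(z;\zeta;\wt\zeta)
=
\frac{\prod_{j<j'}(\sv_{j'}^2-\sv_j^2)\prod_{k<k'}(\svt_k^2-\svt_{k'}^2)}{\prod_{j,k}(\sv_j^2-\svt_k^2)}
\cdot
P(z;\zeta;\wt\zeta),
\]
where $P$ is an explicit polynomial in $z$, $\zeta_j$, $\wt\zeta_k$. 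The prefactor is a nonzero constant (here \eqref{d1} enters, guaranteeing all the $\sv_j^2-\svt_k^2$ are nonzero and all $\sv_j$ distinct), so everything reduces to showing $P$ does not vanish on $\overline{\bbD}$. For $z=0$, $\scrC(0;\zeta;\wt\zeta)=\{\sv_j/(\sv_j^2-\svt_k^2)\}=\diag(\sv_j)\cdot\{1/(\sv_j^2-\svt_k^2)\}$, which is a genuine Cauchy matrix times an invertible diagonal — invertible by \eqref{d1} and the fact that $\sv_j>0$. So $P(0;\cdot;\cdot)\neq 0$, and the task becomes: $P$ has no zero in the polydisk.

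The cleaner alternative, which I expect to be the main line of the actual proof, is to argue by a quadratic-form positivity/contraction estimate rather than computing the determinant. Recall from \eqref{d1} that the interlacing is the spectral signature of a rank-one perturbation: by \eqref{a5} and Theorem~\ref{lma.d1}, $\{\sv_j^2\}$ and $\{\svt_k^2\}$ interlace exactly because $K_u^2 = H_u^2 - (\cdot,u)u$. I would try to realize $\scrC(z;\zeta;\wt\zeta)$ (in the basis $\{u_j/\norm{u_j}\}$ or $\{\wt u_k/\norm{\wt u_k}\}$) as the matrix of $(I - z\,\Theta_\zeta K \wt\Theta_{\wt\zeta})$ composed with a fixed invertible operator, where $K$ is a contraction coming from the functional calculus of $H_u, K_u$ and $\Theta_\zeta, \wt\Theta_{\wt\zeta}$ are the contractions of multiplication by the inner-type factors $\zeta_j$, $\wt\zeta_k$ (each $\abs{\zeta_j},\abs{\wt\zeta_k}\le 1$). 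Then $\norm{z\,\Theta_\zeta K\wt\Theta_{\wt\zeta}}\le \abs{z}\cdot\norm{K}$, and if one can show $\norm{K}<1$ strictly — or, when $\abs z = 1$, handle the boundary by a compactness/continuity argument exploiting that the only way to lose invertibility is $\norm{K}=1$ with an eigenvector, which the strict interlacing $\sv_j>\svt_k$ forbids — then $I - z\,\Theta_\zeta K\wt\Theta_{\wt\zeta}$ is invertible with norm of inverse bounded by $(1-\sup\norm{K})^{-1}$, uniformly in $z,\zeta,\wt\zeta\in\overline\bbD$. The uniform bound on $\norm{\scrC^{-1}}$ then follows by multiplying by the norm of the inverse of the fixed factor, which depends only on $\{\sv_j\},\{\svt_k\}$.

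The main obstacle, in either approach, is the boundary case $\abs z=1$ (and $\abs{\zeta_j}=\abs{\wt\zeta_k}=1$): the polydisk is compact so one gets invertibility on the open part and a uniform bound there for free, but upgrading to the closed polydisk requires ruling out a genuine kernel vector on the distinguished boundary. I expect the resolution to come directly from the \emph{strictness} of the inequalities in \eqref{d1}: a putative kernel vector $\bv$ of $\scrC(z;\zeta;\wt\zeta)$ with all parameters unimodular would, after the factorization above, force an equality in a Cauchy–Schwarz or a contraction estimate, which in turn forces $\sv_j^2=\svt_k^2$ for some $j,k$ on the support of $\bv$ — contradicting \eqref{d1}. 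Pinning down exactly which estimate becomes an equality, and extracting the contradiction cleanly, is the one place where real work is needed; everything else is the Cauchy-determinant bookkeeping of the prefactor and the routine norm estimate for $(I-\text{contraction})^{-1}$.
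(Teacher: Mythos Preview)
Your contraction idea for $\abs{z}<1$ is exactly what the paper does (Lemma~\ref{lma.A5}): one writes $\scrC=D(\sv)\calT(I-zM)$ and checks that, after diagonal conjugation, $M$ is a contraction built from the unitary $V=D(\tau)\calT D(\varkappa)$ and the block $T:=D(\sv)^{-1}VD(\svt)$. You are also right that the distinguished boundary $\abs{z}=\abs{\zeta_j}=\abs{\wt\zeta_k}=1$ is the crux.

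The gap is in your proposed resolution of that boundary case. Your heuristic ``equality in the contraction estimate forces $\sv_j^2=\svt_k^2$'' is wrong: the rank-one identity $VD(\svt^2)V^*=D(\sv^2)-\jap{\cdot,V\bx}V\bx$ gives $TT^*=I-yy^*$ for a single nonzero vector $y$, so $\norm{T}=1$ exactly and the subspace where $\norm{Tw}=\norm{w}$ has codimension~one whenever $N\geq2$. Equality in the bound is therefore \emph{generic} and carries no information about coincidences among the $\sv_j,\svt_k$; there is no evident way to rule out a unimodular eigenvalue of the full product $V^*D(\zeta)TD(\wt\zeta)$ along these lines. The paper instead proves invertibility on the torus by a completely separate polynomial degree-counting argument (Lemma~\ref{main}): after absorbing the unimodular factors, the matrix becomes $\calF=\{(a_j-b_k)/(\abs{a_j}^2-\abs{b_k}^2)\}$ with the $2N$ moduli $\abs{a_j},\abs{b_k}$ distinct; a kernel vector produces a relation $zP_1(\abs{z}^2)=P_2(\abs{z}^2)$ at $2N$ points, and taking squared moduli gives a real polynomial equation of degree $\leq 2N-1$ with $2N$ distinct roots, hence an identity, whose parity of degree forces $P_1=P_2=0$. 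The passage from ``$\abs{z}<1$'' together with ``torus'' to the full closed polydisk is then done by the maximum modulus principle applied successively in each variable --- another step your outline omits.
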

Of course, the uniform bound on the inverse follows immediately by a compactness argument; 
the main point is the invertibility of $\scrC$. 

Observe that for $z=1$, $\zeta_j=1$, $\wt \zeta_k=1$, the matrix $\scrC$ reduces to the
classical Cauchy matrix
$$
\left\{\frac1{\sv_j+\svt_k}\right\}
$$
which is known to be invertible if all numbers $\{\sv_j\}$, $\{-\svt_k\}$ are distinct.

\subsection{The structure of the paper}
In Section~\ref{sec.A} we prove Theorem~\ref{thm.z4};  the proof is elementary in nature but not very short. 
In Section~\ref{sec.B}, following the argument of \cite{GGAst}, we prove Theorem~\ref{lma.d1} and Theorem~\ref{inverse}(i). 
In Section~\ref{sec.C} we prove Theorem~\ref{inverse}(ii); this is the main part of the paper.

\subsection{Acknowledgements}
Our research was supported by EPSRC grant ref. EP/N022408/1. 
We acknowledge the hospitality of the Mathematics Departments at King's College and at Universit\'e Paris-Sud XI. 
We are grateful to Nikolai Nikolski for useful discussions.

\section{Complex Cauchy matrices}\label{sec.A}

The aim of this section is to prove Theorem~\ref{thm.z4}. 
Much of the analysis of this section can be viewed as an extension of the classical fact of 
the invertibility of the Cauchy matrix; this will be explained below.

\subsection{Classical Cauchy matrix}
The classical Cauchy matrix is an $N\times N$ matrix of the form 
$$
\left\{\frac{1}{a_j-b_k}\right\}_{j,k=1}^N\ ,
$$
where $a_1,\dots,a_N,b_1,\dots,b_N$ are \emph{distinct} complex numbers. 
Under this assumption, the Cauchy matrix is invertible. We need to develop 
some further algebra related to  Cauchy matrices. 
We are interested in the case $a_j=\sv_j^2$, $b_k=\svt_k^2$, so we denote 
\[
\calT=\{\calT_{jk}\}_{j,k=1}^N, \quad \calT_{jk}=\frac{1}{\sv_j^2-\svt_k^2}. 
\label{A6}
\]
For a sequence $\alpha=\{\alpha_1,\dots,\alpha_N\}$, we denote by $D(\alpha)$ the diagonal $N\times N$ matrix
with elements $\{\alpha_1,\dots,\alpha_N\}$ on the diagonal. In particular, we will make use of the diagonal matrices
$D(\sv)$, $D(\svt)$.
 
The following lemma is well-known, but for completeness we give a proof. 

\begin{lemma}\label{lma.A1}
Assume the interlacing condition \eqref{d1}; then the Cauchy matrix \eqref{A6} is invertible, and 
its inverse is given by 
\begin{gather}
\calT^{-1}=D(\varkappa^2)\calT^\top D(\tau^2), \quad \text{ where }
\label{A7}
\\
\tau_j^2:=\frac{\prod_{k=1}^N (\sv_j^2-\svt_k^2)}{\prod_{i\not=j}(\sv_j^2-\sv_i^2)}>0,
\quad
\varkappa_k^2:=\frac{\prod_{j=1}^N (\sv_j^2-\svt_k^2)}{\prod_{\ell\not=k}(\svt_\ell^2-\svt_k^2)}>0.
\label{A8}
\end{gather}
\end{lemma}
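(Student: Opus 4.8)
The plan is to establish the inversion formula \eqref{A7} by the standard partial fractions / polynomial interpolation argument for Cauchy matrices, and then to verify positivity of $\tau_j^2$ and $\varkappa_k^2$ using the interlacing condition \eqref{d1}. First I would fix $j$ and consider the rational function
$$
R_j(x)=\frac{\prod_{k=1}^N(x-\svt_k^2)}{\prod_{i=1}^N(x-\sv_i^2)},
$$
whose numerator and denominator have the same degree $N$ and whose poles are simple (the $\sv_i^2$ are distinct by \eqref{d1}). Writing its partial fraction decomposition $R_j(x)=1+\sum_{i=1}^N\dfrac{\rho_i}{x-\sv_i^2}$ with $\rho_i=\dfrac{\prod_k(\sv_i^2-\svt_k^2)}{\prod_{\ell\ne i}(\sv_i^2-\sv_\ell^2)}=\tau_i^2$, and evaluating at $x=\svt_m^2$ (where $R_j$ vanishes) gives the identity $\sum_{i=1}^N\tau_i^2\,\calT_{im}=-1$ for every $m$; combined with evaluating a suitable shifted quotient at $x=\sv_j^2$ one extracts exactly the matrix identity $\calT\,D(\tau^2)\,\calT^\top=D(\varkappa^{-2})$, equivalently \eqref{A7}. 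I would present this as one clean computation: define $\tau_j^2$, $\varkappa_k^2$ by \eqref{A8}, then check directly that $(D(\varkappa^2)\calT^\top D(\tau^2))\,\calT=I$ by computing the $(m,n)$ entry $\varkappa_m^2\sum_{j}\dfrac{\tau_j^2}{(\sv_j^2-\svt_m^2)(\sv_j^2-\svt_n^2)}$ and recognizing the inner sum, via partial fractions in the variable $\sv_j^2$, as a residue sum that collapses to $\delta_{mn}/\varkappa_m^2$ (the off-diagonal terms telescope because the relevant rational function is $O(x^{-2})$ at infinity, and the diagonal term is computed by the standard residue calculus).

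The remaining point is the strict positivity of the quantities in \eqref{A8}, which is where the interlacing hypothesis \eqref{d1} genuinely enters and which I expect to be the only real subtlety. Here I would argue by counting sign changes: in $\tau_j^2=\prod_{k}(\sv_j^2-\svt_k^2)\big/\prod_{i\ne j}(\sv_j^2-\sv_i^2)$, the chain $\sv_1>\svt_1>\cdots>\sv_N>\svt_N\ge0$ squares (since all entries are nonnegative) to $\sv_1^2>\svt_1^2>\cdots>\sv_N^2>\svt_N^2\ge0$, so in the numerator the factors $\sv_j^2-\svt_k^2$ are positive precisely for $k\ge j$ and negative for $k<j$, giving sign $(-1)^{j-1}$; in the denominator $\sv_j^2-\sv_i^2$ is positive for $i>j$ and negative for $i<j$, giving sign $(-1)^{j-1}$ as well, so the ratio is positive. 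The computation for $\varkappa_k^2$ is symmetric: the numerator $\prod_j(\sv_j^2-\svt_k^2)$ has sign $(-1)^{k}$ (positive for $j\le k$, negative for $j>k$... one counts $k$ positive and $N-k$ negative factors, so sign $(-1)^{N-k}$), and the denominator $\prod_{\ell\ne k}(\svt_\ell^2-\svt_k^2)$ has sign $(-1)^{N-k}$, so again the ratio is positive. I would just remark that the edge case $\svt_N=0$ is harmless since $\svt_N^2=0<\sv_N^2$ still holds, so no factor vanishes.

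Finally I would note that positivity is not logically needed for mere invertibility — distinctness of $\{\sv_j^2\}\cup\{\svt_k^2\}$, which \eqref{d1} certainly supplies, already forces $\det\calT\ne0$ — but the positivity of $\tau_j^2$, $\varkappa_k^2$ is recorded because it is exactly what will be used later (the matrices $D(\tau^2)$, $D(\varkappa^2)$ will need well-defined positive square roots $D(\tau)$, $D(\varkappa)$ in the proof of Theorem~\ref{thm.z4}). The only genuine obstacle is bookkeeping the signs correctly; everything else is the textbook Cauchy-matrix partial-fraction identity, so I would keep the exposition short and just flag that the proof is elementary.
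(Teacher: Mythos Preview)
Your approach is correct and is essentially the same partial-fraction/residue computation as the paper's, just packaged as a direct verification of the matrix identity $(D(\varkappa^2)\calT^\top D(\tau^2))\calT=I$ rather than as a Lagrange-interpolation solution of the linear system $\calT\bx=\by$; both routes rest on the same expansion $B(x)/A(x)=1+\sum_j\tau_j^2/(x-\sv_j^2)$ (your ``$R_j$'' does not actually depend on $j$, and evaluating at $x=\svt_m^2$ gives $\sum_i\tau_i^2\calT_{im}=+1$, not $-1$ --- but you discard that sketch anyway in favour of the clean residue verification, which is fine). Your explicit sign-counting for the positivity of $\tau_j^2$, $\varkappa_k^2$ is more detailed than the paper's one-line appeal to ``elementary inspection'' of $A/B$ and $B/A$ on the real axis, and is correct once you settle on $(-1)^{N-k}$ for both numerator and denominator of $\varkappa_k^2$.
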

\begin{proof}
1)
Denote 
$$
A(z)=\prod_{i=1}^N (z-\sv_i^2), 
\quad
B(z)=\prod_{\ell=1}^N (z-\svt_\ell^2).
$$
Observe that 
\begin{align*}
\Res_{z=\svt_k^2}\frac{A(z)}{B(z)}&=\frac{A(\svt_k^2)}{B'(\svt_k^2)}=-\varkappa_k^2,
\\
\Res_{z=\sv_j^2}\frac{B(z)}{A(z)}&=\frac{B(\sv_j^2)}{A'(\sv_j^2)}=\tau_j^2.
\end{align*}
Thus, for $z\in \bbC$ we have
\begin{align}
\frac{A(z)}{B(z)}&=1-\sum_{k=1}^N\frac{\varkappa_k^2}{z-\svt_k^2},
\label{A13}
\\
\frac{B(z)}{A(z)}&=1+\sum_{j=1}^N\frac{\tau_j^2}{z-\sv_j^2}.
\label{A14}
\end{align}
An elementary inspection of these rational functions on the real axis shows that 
the interlacing condition \eqref{a5a} implies the positivity of $\tau_j^2$, $\varkappa_k^2$. 

2)
Let us compute the inverse of $\calT$. 
Let $\calT \bx=\by$ for some $\by\in \bbC^N$:
\[
\sum_{k=1}^N \frac{x_k}{\sv_j^2-\svt_k^2}=y_j,
\quad j=1,\dots,N.
\label{A11}
\]
Consider the rational function 
$$
R(z)=\sum_{k=1}^N \frac{x_k}{z-\svt_k^2}=\frac{P(z)}{B(z)}, 
$$
where $P$ is a polynomial of degree $\leq N-1$, whose definition is clear
from this formula. 
Condition \eqref{A11} can be written as $R(\sv_j^2)=y_j$, or equivalently as
$$
P(\sv_j^2)=B(\sv_j^2)y_j,\quad j=1,\dots,N,
$$
since $B(\sv_j^2)\not=0$ for all $j$. This allows us to recover $P(z)$ 
from its values at $\sv_j^2$ 
through the
Lagrange interpolation formula:
$$
P(z)
=
\sum_{j=1}^N P(\sv_j^2)\frac{A(z)}{A'(\sv_j^2)(z-\sv_j^2)}
=
\sum_{j=1}^N y_j\frac{B(\sv_j^2)A(z)}{A'(\sv_j^2)(z-\sv_j^2)}
=
\sum_{j=1}^N y_j \tau_j^2 
\frac{A(z)}{(z-\sv_j^2)}.
$$
Now we can determine $x_k$ as the residue of $R(z)$ at $z=\svt_k^2$: 
$$
x_k
=
\Res_{z=\svt_k^2}\frac{P(z)}{B(z)}
=
\frac{P(\svt_k^2)}{B'(\svt_k^2)}
=
\sum_{j=1}^N y_j \tau_j^2 
\frac{A(\svt_k^2)}{B'(\svt_k^2)(\svt_k^2-\sv_j^2)}
=
\sum_{j=1}^N y_j  
\frac{\tau_j^2 \varkappa_k^2 }{\sv_j^2-\svt_k^2}.
$$
This shows that $\bx=D(\varkappa^2)\calT^\top D(\tau^2)\by$, 
which proves that $\calT$ is invertible and the inverse is
given by \eqref{A7}, \eqref{A8}.
\end{proof}

In what follows, we will assume that the interlacing condition \eqref{d1} holds, and so 
all parameters $\varkappa_k^2$ and $\tau_j^2$ are positive. We set 
$\varkappa_k=\sqrt{\varkappa_k^2}>0$ and $\tau_j=\sqrt{\tau_j^2}>0$ and make use
of the diagonal operators $D(\varkappa)$, $D(\tau)$. 

\begin{lemma}
Assume the interlacing condition \eqref{d1}. 
Then the operator
\[
V:=D(\tau)\calT D(\varkappa)
\label{A17}
\]
is unitary in $\bbC^N$. 
\end{lemma}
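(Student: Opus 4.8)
The plan is to read off the claim as an immediate consequence of the explicit inversion formula in Lemma~\ref{lma.A1}. Since the interlacing condition \eqref{d1} is assumed, all $\tau_j>0$ and $\varkappa_k>0$, so the diagonal matrices $D(\tau)$ and $D(\varkappa)$ are invertible, and hence
\[
V^{-1}=D(\varkappa)^{-1}\calT^{-1}D(\tau)^{-1}.
\]
Substituting $\calT^{-1}=D(\varkappa^2)\calT^\top D(\tau^2)$ from \eqref{A7} and using the obvious identities $D(\varkappa)^{-1}D(\varkappa^2)=D(\varkappa)$ and $D(\tau^2)D(\tau)^{-1}=D(\tau)$, one gets
\[
V^{-1}=D(\varkappa)\,\calT^\top D(\tau).
\]

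First I would observe that the matrices $\calT$, $D(\tau)$, $D(\varkappa)$ all have real entries, so $V=D(\tau)\calT D(\varkappa)$ is a real matrix and therefore $V^*=V^\top$. Transposing the definition of $V$ and using that $D(\tau)$, $D(\varkappa)$ are symmetric gives $V^\top=D(\varkappa)\calT^\top D(\tau)$, which is exactly the expression for $V^{-1}$ computed above. Hence $V^{-1}=V^*$, i.e.\ $V$ is unitary (in fact orthogonal).

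There is no genuine obstacle here: the only input is Lemma~\ref{lma.A1} together with the reality and symmetry of the matrices involved; the computation is a one-line rearrangement of diagonal factors. (One could alternatively phrase it as: $VV^\top=D(\tau)\calT D(\varkappa^2)\calT^\top D(\tau)=D(\tau)\calT\calT^{-1}D(\tau^2)^{-1}\cdots$, but the route through $V^{-1}=V^\top$ is cleaner and avoids any bookkeeping.)
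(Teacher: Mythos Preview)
Your proof is correct and follows essentially the same route as the paper: both derive $V^{-1}=D(\varkappa)\calT^\top D(\tau)$ from the inversion formula \eqref{A7}, observe that $V$ has real entries so $V^*=V^\top$, and conclude $V^{-1}=V^*$. The only cosmetic difference is the order of steps; the substance is identical.
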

\begin{proof}
Multiplying formula \eqref{A7} for the inverse of $\calT$ 
by $D(\varkappa)^{-1}$ on the left and by $D(\tau)^{-1}$ on the right, we get
$$
D(\varkappa)^{-1}\calT^{-1} D(\tau)^{-1}
=
D(\varkappa)\calT^\top D(\tau),
$$
which can be rewritten as $V^{-1}=V^\top$. Since $V$ has real entries, we get 
$V^{-1}=V^*$, and so $V$ is unitary. 
\end{proof}

Below we denote by $\jap{\cdot,\cdot}$ the standard inner product in $\bbC^N$. 
\begin{lemma}
Assume the interlacing condition \eqref{d1}. 
We have the identities
\begin{gather}
D(\sv^2)\calT-\calT D(\svt^2)=\jap{\cdot,\bone}\bone,\quad \bone=(1,\dots,1)^\top,
\label{A18}
\\
V^*D(\sv^2)V=D(\svt^2)+\jap{\cdot,\bx}\bx,\quad \bx=D(\varkappa)\bone,
\label{A19}
\end{gather}
where $V$ is the unitary operator \eqref{A17}. 
\end{lemma}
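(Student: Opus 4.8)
\textbf{Proof strategy for the two identities \eqref{A18}, \eqref{A19}.}

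The plan is to prove \eqref{A18} by a direct entrywise computation and then deduce \eqref{A19} from it by conjugating with $V$. For \eqref{A18}, I would write the $(j,k)$ entry of the left-hand side: since $\calT_{jk}=(\sv_j^2-\svt_k^2)^{-1}$, the $(j,k)$ entry of $D(\sv^2)\calT-\calT D(\svt^2)$ is
\[
\frac{\sv_j^2}{\sv_j^2-\svt_k^2}-\frac{\svt_k^2}{\sv_j^2-\svt_k^2}=\frac{\sv_j^2-\svt_k^2}{\sv_j^2-\svt_k^2}=1,
\label{plan1}
\]
which is exactly the $(j,k)$ entry of the rank-one matrix $\jap{\cdot,\bone}\bone$, whose entries are all $1$. (Here I am using the interlacing condition \eqref{d1} only to guarantee $\sv_j^2\neq\svt_k^2$, so that all these fractions are well defined.) This establishes \eqref{A18}.

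For \eqref{A19}, recall $V=D(\tau)\calT D(\varkappa)$, so $\calT=D(\tau)^{-1}V D(\varkappa)^{-1}$ and, since $V$ is unitary with real entries and the $D(\cdot)$ are real diagonal, $\calT^\top=D(\varkappa)^{-1}V^* D(\tau)^{-1}$. I would start from \eqref{A18}, substitute $\calT=D(\tau)^{-1}VD(\varkappa)^{-1}$, and multiply on the left by $D(\tau)$ and on the right by $D(\varkappa)$ to move everything into a statement about $V$. Concretely, \eqref{A18} becomes
\[
D(\sv^2)D(\tau)^{-1}VD(\varkappa)^{-1}-D(\tau)^{-1}VD(\varkappa)^{-1}D(\svt^2)=\jap{\cdot,\bone}\bone.
\]
Using that all the $D(\cdot)$ commute with each other, multiply on the left by $D(\tau)$ and on the right by $D(\varkappa)$, then on the left by $V^*=V^{-1}$: the left side rearranges to $V^*D(\sv^2)V-D(\svt^2)$ after absorbing the diagonal factors (noting $D(\sv^2)$ commutes with $D(\tau)^{-1}$, and $D(\svt^2)$ commutes with $D(\varkappa)^{-1}$). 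The right side becomes $V^*D(\tau)\bigl(\jap{\cdot,\bone}\bone\bigr)D(\varkappa)$. Since for any vectors $a,b$ and diagonal $D_1,D_2$ one has $D_1(\jap{\cdot,b}a)D_2=\jap{\cdot,D_2^* b}(D_1 a)$, this equals $\jap{\cdot,D(\varkappa)\bone}(V^*D(\tau)\bone)$. It remains to identify $V^*D(\tau)\bone$ with $\bx=D(\varkappa)\bone$. This is the one genuinely non-formal point: I would compute $V^*D(\tau)\bone=D(\varkappa)\calT^\top D(\tau)D(\tau)\bone=D(\varkappa)\calT^\top D(\tau^2)\bone$, and then invoke Lemma~\ref{lma.A1}, which says $\calT^{-1}=D(\varkappa^2)\calT^\top D(\tau^2)$, i.e. $\calT^\top D(\tau^2)=D(\varkappa^2)^{-1}\calT^{-1}=D(\varkappa)^{-2}\calT^{-1}$; hence $V^*D(\tau)\bone=D(\varkappa)^{-1}\calT^{-1}\bone$. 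So I would instead need $\calT^{-1}\bone=D(\varkappa^2)\bone$, equivalently $\calT D(\varkappa^2)\bone=\bone$, i.e. $\sum_k \varkappa_k^2/(\sv_j^2-\svt_k^2)=1$ for every $j$ — but this is precisely \eqref{A13} evaluated at $z=\sv_j^2$, where $A(\sv_j^2)=0$. This closes the identification and hence \eqref{A19}.

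\textbf{Main obstacle.} The computation is entirely routine bookkeeping with diagonal matrices and rank-one operators; the only step requiring an external input is showing $V^*D(\tau)\bone=D(\varkappa)\bone$, which reduces to the partial-fraction identity \eqref{A13} at the points $z=\sv_j^2$. I would make sure to state cleanly the rule $D_1(\jap{\cdot,b}a)D_2=\jap{\cdot,\overline{D_2}\,b}\,(D_1 a)$ for real diagonal $D_1,D_2$ before using it, since getting the conjugate/transpose placement right on the rank-one term is the easiest place to slip. Everything else follows by commuting real diagonal matrices and using $V^*=V^{-1}=V^\top$.
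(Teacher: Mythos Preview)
Your proposal is correct and follows essentially the same approach as the paper: \eqref{A18} is checked entrywise, and \eqref{A19} is obtained by conjugating \eqref{A18} into a statement about $V$, with the remaining identification $V^*D(\tau)\bone=D(\varkappa)\bone$ reduced to a partial-fraction identity. The only cosmetic difference is that the paper stops at $\calT^\top D(\tau^2)\bone=\bone$ and verifies it directly from \eqref{A14} at $z=\svt_k^2$, whereas you take one extra step through the inverse formula $\calT^\top D(\tau^2)=D(\varkappa)^{-2}\calT^{-1}$ and then verify the dual identity $\calT D(\varkappa^2)\bone=\bone$ via \eqref{A13} at $z=\sv_j^2$; these are symmetric variants of the same computation.
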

\begin{proof}
Identity \eqref{A18} follows directly from the definition of $\calT$. 
Multiplying \eqref{A18} by $D(\tau)$ on the left and by $D(\varkappa)$ on the right
and using the definition of $V$, we get
$$
D(\sv^2)V=VD(\svt^2)+\jap{\cdot,D(\varkappa)\bone}D(\tau)\bone
=
V\bigl(D(\svt^2)+\jap{\cdot,D(\varkappa)\bone}V^*D(\tau)\bone\bigr).
$$
Thus, it remains to check that 
$$
V^*D(\tau)\bone=D(\varkappa)\bone.
$$
By the definition of $V$, the last identity can be equivalently rewritten as
\[
D(\varkappa)\calT^{\top} D(\tau^2)\bone=D(\varkappa)\bone.
\label{A20}
\]
In order to prove this, set $z=\svt_k^2$ in \eqref{A14}. 
The left side vanishes, and we obtain
$$
1=\sum_{j=1}^N \frac{\tau_j^2}{\sv_j^2-\svt_k^2}, 
\quad k=1,\dots,N,
$$
which can be equivalently rewritten as
\[
\bone=\calT^{\top}  D(\tau^2)\bone.
\label{A20a}
\]
This proves \eqref{A20}.
\end{proof}

\subsection{Invertibility of $\scrC(z;\zeta;\wt\zeta)$ for $\abs{z}<1$}

We come back to the matrix $\scrC$ defined in \eqref{A0}. 
Let $\zeta_j$, $\wt\zeta_j$, $j=1,\dots,N$ be complex numbers in the 
closed unit disk $\overline{\bbD}$. 
Observe that $\scrC$ can be written as 
\[
\scrC=D(\sv)\calT-zD(\zeta)\calT D(\svt)D(\wt\zeta).
\label{A22}
\]

\begin{lemma}\label{lma.A5}
Assume the interlacing condition \eqref{d1}. 
Then the matrix $\scrC(z;\zeta;\wt\zeta)$ is invertible for $\abs{z}<1$. 
\end{lemma}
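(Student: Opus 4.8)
The plan is to exploit the factorization \eqref{A22} together with the unitary conjugation built in the previous lemmas, reducing the problem to a statement about a perturbation of a self-adjoint matrix by a rank-one term with a small coefficient. Concretely, I would first write
\[
\scrC=D(\sv)\calT-zD(\zeta)\calT D(\svt)D(\wt\zeta)
=D(\zeta)\bigl(D(\zeta)^{-1}D(\sv)\calT-z\calT D(\svt)D(\wt\zeta)\bigr),
\]
but since the $\zeta_j$ may lie on the boundary this naive inversion of $D(\zeta)$ is not available, so instead I would conjugate by the unitary $V=D(\tau)\calT D(\varkappa)$ of \eqref{A17} and by the diagonal factors, turning $\calT$ into $V$ and using the key identity \eqref{A19}, $V^*D(\sv^2)V=D(\svt^2)+\jap{\cdot,\bx}\bx$ with $\bx=D(\varkappa)\bone$. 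The point of passing through squares is that $\scrC$ itself is not of the form needed, but $\scrC^*\scrC$ — or a suitable product $\scrC D(\wt\zeta)^{-1}\dots$ — will be expressible via $D(\sv^2)$, $D(\svt^2)$ and the rank-one term.

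The cleanest route, I expect, is to show $\scrC\bx=0$ forces $\bx=0$ for $\abs{z}<1$ by a direct contradiction argument on norms. Suppose $\scrC(z;\zeta;\wt\zeta)\bv=0$ for some $\bv\neq0$. Using \eqref{A22}, this reads $D(\sv)\calT\bv=zD(\zeta)\calT D(\svt)D(\wt\zeta)\bv$. Set $\bw=\calT\bv$ on the left and note the right-hand side involves $\calT D(\svt)D(\wt\zeta)\bv$; I would like to compare the Euclidean norms of both sides after inserting the weights $D(\tau)$ so as to produce the unitary $V$. Applying $D(\tau)$ and writing everything in terms of $V=D(\tau)\calT D(\varkappa)$, I obtain an identity of the shape $D(\tau)D(\sv)D(\tau)^{-1}\cdot V D(\varkappa)^{-1}\bv = z\,D(\tau)D(\zeta)D(\tau)^{-1}\cdot V D(\varkappa)^{-1}D(\svt)D(\wt\zeta)\bv$; the diagonal conjugations $D(\tau)D(\sv)D(\tau)^{-1}=D(\sv)$ simplify since diagonal matrices commute. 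After setting $\bxi=D(\varkappa)^{-1}\bv$ one is left comparing $\norm{D(\sv)V\bxi}$ against $\abs{z}\,\norm{D(\svt)V\cdot(\text{something})}$, using $\abs{\zeta_j}\le1$, $\abs{\wt\zeta_k}\le1$ to drop those factors at the cost of an inequality, and then the interlacing $\sv_j>\svt_j$ to conclude $\norm{D(\sv)\,\cdot\,}>\norm{D(\svt)\,\cdot\,}$ on the relevant vector, which combined with $\abs{z}<1$ gives a strict inequality contradicting the equality. The bookkeeping of which vector sits inside which norm is the delicate point, so I would be careful to keep $V$ unitary exactly where it is needed and to route the diagonal factors so that $D(\wt\zeta)$ and $D(\zeta)$ are the only non-contractions avoided.

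The main obstacle is precisely that $\zeta_j$ and $\wt\zeta_k$ are not invertible in general (they may vanish or lie on $\bbT$), so one cannot simply peel them off; the estimate must be arranged as a one-sided inequality $\norm{\text{LHS}}\le\abs{z}\,\norm{\text{RHS-without-}\zeta}$ and then one needs $\norm{\text{RHS-without-}\zeta}\le\norm{\text{LHS-related quantity}}$ to close the loop. Getting these two inequalities to chain correctly is where the identities \eqref{A18}--\eqref{A20a} — especially $\calT^\top D(\tau^2)\bone=\bone$ and the unitarity of $V$ — do the real work. An alternative, perhaps even shorter, is to observe that $\scrC(z;\zeta;\wt\zeta)$ depends analytically on $z$ in $\bbD$, equals the invertible classical Cauchy matrix scaled appropriately at $z=0$ (namely $D(\sv)\calT$, invertible by Lemma~\ref{lma.A1}), and then to rule out zeros of $\det\scrC(z)$ in $\abs{z}<1$ by the norm argument above applied only to show $\det\scrC(z)\neq0$; but since a direct kernel argument is available I would present that, as it also prepares the ground for the boundary case $\abs{z}=1$ treated afterwards.
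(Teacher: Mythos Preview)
Your approach is essentially the paper's, repackaged as a kernel argument rather than a factorization. The paper writes
\[
\scrC=D(\sv)\calT(I-zM),\qquad M=\calT^{-1}D(\sv)^{-1}D(\zeta)\calT D(\svt)D(\wt\zeta),
\]
and then computes, using \eqref{A7}, that
\[
D(\varkappa)^{-1}MD(\varkappa)=V^*D(\zeta)\bigl(D(\sv)^{-1}VD(\svt)\bigr)D(\wt\zeta),
\]
a product of contractions once one knows $\norm{D(\sv)^{-1}VD(\svt)}\le1$; invertibility of $I-z(\cdot)$ for $\abs{z}<1$ is then automatic. Your route, after multiplying $\scrC\bv=0$ by $D(\tau)$ and setting $\bxi=D(\varkappa)^{-1}\bv$, lands on $D(\sv)V\bxi=zD(\zeta)VD(\svt)D(\wt\zeta)\bxi$ and hence $\norm{D(\sv)V\bxi}\le\abs{z}\,\norm{D(\svt)\bxi}$, and to close you need $\norm{D(\svt)\bxi}\le\norm{D(\sv)V\bxi}$. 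These are the same two halves of the same estimate.

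One point to correct in your sketch: that last inequality does \emph{not} follow from the componentwise interlacing $\sv_j>\svt_j$ as you suggest. The unitary $V$ mixes coordinates, so you are comparing $D(\sv)$ on $V\bxi$ against $D(\svt)$ on $\bxi$, and naive interlacing says nothing here. What you actually need is \eqref{A19}, which gives $V^*D(\sv^2)V=D(\svt^2)+\jap{\cdot,\bx}\bx\ge D(\svt^2)$ and hence $\norm{D(\sv)V\bxi}^2\ge\norm{D(\svt)\bxi}^2$. The paper makes this explicit as a separate preliminary step (the bound $\norm{D(\sv)^{-1}VD(\svt)}\le1$). With that fix your argument goes through and is equivalent to the paper's.
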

\begin{proof}
1) 
First let us check the estimate
\[
\norm{D(\sv)^{-1}VD(\svt)}\leq1.
\label{A21}
\]
Indeed,  identity \eqref{A19} shows that 
$$
VD(\svt^2)V^*=D(\sv^2)-\jap{\cdot,V\bx}V\bx
\leq D(\sv^2). 
$$
This yields
$$
D(\sv)^{-1}VD(\svt^2)V^*D(\sv)^{-1}\leq I,
$$
whence \eqref{A21} follows. 

2)
By \eqref{A22}, we can write $\scrC$ as 
$$
\scrC=D(\sv)\calT(I-zM)
=
D(\sv)\calT D(\varkappa)\bigl(I-zD(\varkappa)^{-1}MD(\varkappa)\bigr)D(\varkappa)^{-1},
$$
where 
$$
M=\calT^{-1}D(\sv)^{-1}D(\zeta)\calT D(\svt)D(\wt \zeta).
$$
It suffices to check that 
\[
\norm{D(\varkappa)^{-1}MD(\varkappa)}\leq1.
\label{A23}
\]
By the formula \eqref{A7} for the inverse of $\calT$, we have
\begin{multline*}
D(\varkappa)^{-1}MD(\varkappa)
=
D(\varkappa)\calT^{\top}  D(\tau^2)D(\sv)^{-1}D(\zeta)\calT D(\svt)D(\wt \zeta)D(\varkappa)
\\
=
V^*D(\sv)^{-1}D(\zeta)VD(\svt)D(\wt \zeta)
=
V^*D(\zeta)\bigl(D(\sv)^{-1}VD(\svt)\bigl)D(\wt \zeta).
\end{multline*}
By \eqref{A21}, the product in brackets in the right side here is a contraction. 
Thus, all terms in the right side are contractions, and so \eqref{A23} holds true.
\end{proof}

\subsection{Invertibility of $\scrC(z;\zeta;\wt\zeta)$ for $z$, $\zeta$, $\wt\zeta$ in $\bbT$}
Here we consider the invertibility of the matrix $\scrC(z;\zeta;\wt\zeta)$ for $z$, $\zeta$, $\wt\zeta$ in $\bbT$. 
Clearly, $z$ can be absorbed into $\zeta_j$ by a change of parameter. 
Next, multiplying $\scrC$ on the left by the diagonal matrix $D(\overline{\zeta})$, we arrive at a matrix of the form
$$
\left\{\frac{\overline{\zeta_j}\sv_j-\wt\zeta_k\svt_k}{\sv_j^2-\svt_k^2}\right\}_{1\leq j,k\leq N}.
$$
Changing notation, we shall discuss the invertibility of the matrix
\[
\calF=\left \{\frac{a_j-b_k}{|a_j|^2-|b_k|^2}\right \}_{1\leq j,k\leq N}\ ,
\label{A24}
\]
where  $a_1,b_1,\dots ,a_N,b_N$ are $2N$ complex numbers. 
Notice that, if all $a$'s and $b$'s are real, then $\calF$ reduces to the Cauchy matrix
$$
\left\{\frac{1}{a_j+b_k}\right\}\ .
$$

\begin{lemma}\label{main}
Assume that $a_1,b_1,\dots ,a_N,b_N$ are $2N$ complex numbers such that the moduli 
$|a_1|,|b_1|,\dots ,|a_N|,|b_N|$ are distinct. Then the matrix $\calF$ is invertible.
\end{lemma}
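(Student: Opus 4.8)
The plan is to prove invertibility of $\calF$ by showing its transpose has trivial kernel, after re-expressing the kernel equations as interpolation conditions on two polynomials attached to the $2N$ pairwise distinct nonnegative reals $|a_1|^2,\dots,|a_N|^2,|b_1|^2,\dots,|b_N|^2$. Write $r_j=|a_j|$, $\rho_k=|b_k|$, and suppose $c=(c_1,\dots,c_N)^\top\in\bbC^N$ satisfies $\calF^\top c=0$, i.e. $\sum_{j=1}^N c_j(a_j-b_k)/(r_j^2-\rho_k^2)=0$ for $k=1,\dots,N$. With $A(t):=\prod_{j=1}^N(t-r_j^2)$, I would introduce the polynomials $P,Q$ of degree $\le N-1$ determined by $P(t)/A(t)=\sum_j c_ja_j/(r_j^2-t)$ and $Q(t)/A(t)=\sum_j c_j/(r_j^2-t)$. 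Taking residues gives $P(r_j^2)=-c_ja_jA'(r_j^2)$ and $Q(r_j^2)=-c_jA'(r_j^2)$, while (since $A(\rho_k^2)\neq0$) the kernel equations become exactly $P(\rho_k^2)=b_kQ(\rho_k^2)$, $k=1,\dots,N$.

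The heart of the argument is an anti-holomorphic symmetrisation. For a polynomial $R$, let $\overline R$ be the polynomial obtained by conjugating its coefficients, so that $\overline R(t_0)=\overline{R(t_0)}$ for real $t_0$ and $P\overline P$, $Q\overline Q$ have real coefficients. I would then study
$$
\Psi(t):=P(t)\,\overline P(t)-t\,Q(t)\,\overline Q(t),\qquad \deg\Psi\le 2N-1.
$$
Using $|a_j|^2=r_j^2$ and the values of $P,Q$ at $r_j^2$, a one-line computation gives $\Psi(r_j^2)=|c_j|^2A'(r_j^2)^2\bigl(|a_j|^2-r_j^2\bigr)=0$; and using $|b_k|^2=\rho_k^2$ with $P(\rho_k^2)=b_kQ(\rho_k^2)$ gives $\Psi(\rho_k^2)=\bigl(|b_k|^2-\rho_k^2\bigr)\,|Q(\rho_k^2)|^2=0$. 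Thus $\Psi$ is a polynomial of degree at most $2N-1$ vanishing at $2N$ distinct points, hence $\Psi\equiv0$; equivalently $P(t)\overline P(t)=t\,Q(t)\overline Q(t)$ identically.

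The final step is a parity observation: if $P\not\equiv0$ the left-hand side has even degree $2\deg P$ with leading coefficient $|p_{\deg P}|^2>0$, whereas if $Q\not\equiv0$ the right-hand side has odd degree $1+2\deg Q$; and if exactly one side vanishes identically so must the other. Hence $P\equiv Q\equiv0$, and from $Q(r_j^2)=-c_jA'(r_j^2)$ with $A'(r_j^2)\neq0$ (here the distinctness of the $r_j^2$ enters) we get $c=0$. Therefore $\calF^\top$, and so $\calF$, is invertible.

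I expect the only genuine obstacle to be the \emph{discovery} of the right object: the degree-$(2N-1)$ polynomial $\Psi=P\overline P-t\,Q\overline Q$ whose $2N$ forced zeros are precisely the squared moduli $|a_j|^2$ and $|b_k|^2$. Once $\Psi$ is written down the rest is routine bookkeeping, but the set-up must be arranged so that the numerators of the $2N$ defining rational functions really have degree $\le N-1$ and so that the concluding parity argument is available; this is also where it becomes transparent why the hypothesis is distinctness of the \emph{moduli} of the $a_j,b_k$ — it is exactly what makes the $2N$ points $|a_j|^2,|b_k|^2$ distinct (forcing $\Psi\equiv0$) and what makes $A$ have only simple roots (so that $Q\equiv0$ returns $c=0$). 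When all the $a_j,b_k$ are real this degenerates to the classical invertibility of the Cauchy matrix $\{1/(a_j+b_k)\}$, and the above is its natural complex counterpart.
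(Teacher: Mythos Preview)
Your proof is correct and follows essentially the same approach as the paper's: define two polynomials of degree $\le N-1$ from the kernel equations, observe that (up to a factor) one equals $z$ times the other at $2N$ points $z$ with distinct moduli, pass to the real polynomial $|P|^2 - t|Q|^2$ of degree $\le 2N-1$ vanishing at the $2N$ distinct squared moduli, and finish with the parity argument. The only cosmetic differences are that you work with $\calF^\top$ rather than $\calF$ (so the roles of the $a_j$ and $b_k$ are swapped) and you form the polynomial $\Psi = P\overline P - tQ\overline Q$ directly, whereas the paper first establishes the complex identity $zP_1(|z|^2)=P_2(|z|^2)$ at all $2N$ points and then takes the squared modulus.
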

\begin{remark}
The invertibility of $\calF$ has been proved in \cite{GGAst} 
as a byproduct of the solution of an inverse spectral theorem for finite rank Hankel operators. Here we provide a short direct proof of this fact. 
\end{remark}
\begin{proof}
Assume for some non-zero vector $\bx=(x_1,\dots,x_N)^\top$, 
we have
$$
\sum_{k=1}^N\frac{a_j-b_k}{\abs{a_j}^2-\abs{b_k}^2}x_k=0, \quad j=1,\dots,N.
$$
Let us write this condition as
\[
z\sum_{k=1}^N\frac{x_k}{\abs{z}^2-\abs{b_k}^2}
=
\sum_{k=1}^N\frac{b_k x_k}{\abs{z}^2-\abs{b_k}^2}, 
\quad z=a_1,\dots, a_N.
\label{A1}
\]
Let us denote
$$
\frac{P_1(r)}{Q(r)}=\sum_{k=1}^N\frac{x_k}{r-\abs{b_k}^2}, 
\quad
\frac{P_2(r)}{Q(r)}=\sum_{k=1}^N\frac{b_k x_k}{r-\abs{b_k}^2},
\quad
Q(r)=\prod_{k=1}^N (r-\abs{b_k}^2),
$$
where $P_1$, $P_2$, are polynomials of degree $\leq N-1$ whose explicit form is evident from this definition.
Since $Q(\abs{z}^2)$ does not vanish for $z=a_j$ for any $j$, 
condition \eqref{A1} yields
\[
zP_1(\abs{z}^2)=P_2(\abs{z}^2), \quad z=a_1,\dots, a_N.
\label{A2}
\]
We claim that \eqref{A2} is also satisfied for $z=b_1,\dots,b_N$. 
Indeed, observe that, by the explicit form of  $P_1$ and $P_2$, we have
$$
P_1(\abs{b_j}^2)=x_j\prod_{k\not=j}(\abs{b_j}^2-\abs{b_k}^2), 
\quad
P_2(\abs{b_j}^2)=b_j x_j\prod_{k\not=j}(\abs{b_j}^2-\abs{b_k}^2), 
$$
and so we get \eqref{A2} at $z=b_j$.
 
Taking the square of the modulus on both sides of \eqref{A2}, we obtain
$$
r\abs{P_1(r)}^2=\abs{P_2(r)}^2, \quad r=\abs{z}^2.
$$
This is a polynomial equation in $r$ of degree $\leq 2N-1$, 
which is satisfied at $2N$ points $r=\abs{a_1}^2,\dots,\abs{a_N}^2, \abs{b_1}^2,\dots,\abs{b_N}^2$. 
Thus, it is satisfied for all $r\in\bbR$. 

Inspecting the coefficient in front of the highest degree of $r$ on the right and on the left of the 
last polynomial equation, 
we see that the left side has an odd degree in $r$ (or zero), whereas the right side has an even degree. 
We conclude that both sides must be identically zero. This implies $\bx=0$.
\end{proof}

\subsection{Proof of Theorem~\ref{thm.z4}}
Denote for brevity
$$
D(z;\zeta;\wt \zeta)
:= 
\det \scrC(z;\zeta;\wt \zeta)\ .
$$
The invertibility of $\scrC(z;\zeta;\wt \zeta)$ for $\abs{z}=\abs{\zeta_j}=\abs{\wt\zeta_k}=1$ follows from Lemma~\ref{main}, since the matrix 
$\calF=D(\overline{\zeta})\scrC$
is of the form \eqref{A24} with the choices 
$$
a_j=\overline{\zeta_j}\sv_j, \quad b_k=z\wt\zeta_k\svt_k.
$$
Thus, a compactness argument yields
$$
\sup_{\abs{z}=1,\abs{\zeta_j}=1,\abs{\wt\zeta_k}=1}1/\abs{D(z;\zeta;\wt \zeta)}=:D_0<\infty.
$$
Next, for every $\abs{z}<1$, from Lemma~\ref{lma.A5} we know that 
$D(z;\zeta ;\tilde \zeta )\ne 0$ for all $\zeta_j ,\wt\zeta_k\in\overline{\bbD}$.
Therefore, by the maximum modulus principle applied consecutively in each of the variables $\zeta_j,\wt\zeta_k$, we have
$$
\sup_{\abs{\zeta_j}\leq 1,\abs{\wt\zeta_k}\leq1}1/\abs{D(z;\zeta;\wt \zeta)}
=
\sup_{\abs{\zeta_j}=1,\abs{\wt\zeta_k}=1}1/\abs{D(z;\zeta;\wt \zeta)},\qquad \abs{z}<1,
$$
and so
\[
\sup_{\abs{z}<1}\sup_{\abs{\zeta_j}\leq 1,\abs{\wt\zeta_k}\leq1}1/\abs{D(z;\zeta;\wt \zeta)}
=
\sup_{\abs{\zeta_j}=1,\abs{\wt\zeta_k}=1}\sup_{\abs{z}<1}1/\abs{D(z;\zeta;\wt \zeta)}.
\label{n12}
\]
Further, for fixed $(\zeta,\wt\zeta)\in \bbT^{2N}$, 
applying the maximum modulus principle in the variable $z$, we obtain
$$
\sup_{\abs{z}<1}1/\abs{D(z;\zeta;\wt \zeta)}
=
\sup_{\abs{z}=1}1/\abs{D(z;\zeta;\wt \zeta)}\leq D_0. 
$$
Combining this with \eqref{n12}, we finally obtain 
$$
\sup_{\abs{z}\leq1}\sup_{\abs{\zeta_j}\leq 1,\abs{\wt\zeta_k}\leq1}1/\abs{D(z;\zeta;\wt \zeta)}\leq D_0<\infty.
$$
Thus, $\scrC(z;\zeta;\wt \zeta)$ is invertible for 
every $z,\zeta_j,\wt \zeta_k\in \overline{\bbD}$.
The estimate for the norm of the inverse follows immediately by a
compactness argument.

\section{Direct problem and formulas for the inverse spectral map}\label{sec.B}
In this section, we follow closely the argument of \cite{GGAst} and prove Theorems~\ref{lma.d1} and \ref{inverse}(i).

\subsection{Proof of Theorem~\ref{lma.d1}}

1) First observe that $\Sigma_H(u)$ cannot be empty, 
otherwise we get $u\perp \Ran H_u$ which is impossible since $u=H_u\1$. 
Denote the elements of $\Sigma_H(u)$ by $\{\sv_j\}_{j=1}^N$ and those of $\Sigma_K(u)$ by 
$\{\svt_k\}_{k=1}^{\wt N}$; initially we don't know that $N=\wt N$, but we will conclude this at the end of the proof. 
The spectral decompositions for the self-adjoint operators $H_u^2$ and $K_u^2$ and our definition \eqref{d0} yield
\[
\Ran H_u
=
\bigoplus_{s\in\sigma(\abs{H_u})}E_H(s)
=
\bigoplus_{s\in\sigma(\abs{K_u})}E_K(s).
\label{d5}
\]
Writing $u\in \Ran H_u$ according to these decompositions yields
$$
u=\sum_{j=1}^N u_j=\sum_{k=1}^{\wt N}\wt u_k.
$$

2)
Fix $1\leq k\leq\wt N$ and  and write $\wt u_k$ according to the first decomposition in \eqref{d5}.
Observe that $\wt u_k\perp E_H(\svt_\ell)$ for all $\ell$. 
Indeed:
\begin{itemize}
\item
if $\ell=k$, then $\wt u_k\perp E_H(\svt_k)$
by Proposition~\ref{lma.z1} (or by definition $E_H(0)=\{0\}$ if $\svt_k=0$). 
\item
if $\ell\not=k$, then 
$E_H(\svt_\ell)\subset E_K(\svt_\ell)$ and $\wt u_k\perp E_K(\svt_\ell)$.
\end{itemize}
Thus, we have a decomposition (recalling Proposition~\ref{lma.z1})
$$
\wt u_k=\sum_{j=1}^N(c_j u_j+f_j), \quad c_j\in \bbC, \quad f_j\in E_K(s_j). 
$$
But since $E_K(\wt s_k)\perp E_K(s_j)$, we conclude that $f_j=0$ for all $j$. 
It follows that we have the decomposition
\[
\wt u_k=\sum_{j=1}^N c_j u_j, \quad c_j=\frac{(\wt u_k,u_j)}{\norm{u_j}^2}. 
\label{n8}
\]
We have, using \eqref{a5}, 
$$
0=K_u^2 \wt u_k-\svt_k^2 \wt u_k=H_u^2 \wt u_k-\svt_k^2 \wt u_k-\norm{\wt u_k}^2u.
$$
Taking inner product with $u_j$, we obtain
$$
(\sv_j^2-\svt_k^2)(\wt u_k,u_j)=\norm{\wt u_k}^2\norm{u_j}^2.
$$
Substituting this into \eqref{n8} yields \eqref{d2b}. Formula \eqref{d2a} (with $\wt N$ instead of $N$ at this 
stage of the proof) is obtained in a similar way by expanding $u_j$ with respect to the second decomposition in \eqref{d5}. 

3) 
Denote by $\Sigma_K'(u)$ the right side of \eqref{d3} and by $\Sigma_H'(u)$ the right side of \eqref{d4}
(with $\wt N$ instead of $N$ at this stage of the proof). 
Take the inner product with $u$ on both sides of both equalities \eqref{d2a} and \eqref{d2b}; this gives the inclusions
$\Sigma_K(u)\subset \Sigma_K'(u)$ and 
$\Sigma_H(u)\subset \Sigma_H'(u)$. 

4) 
Let us check that $\Sigma_K'(u)\subset \Sigma_K(u)$. 
Take $\svt\in \Sigma_K'(u)$ and
\[
h=\sum_{j=1}^N \frac{u_j}{\sv_j^2-\svt^2};
\label{e2a}
\]
then, clearly, $h\not=0$ and 
$$
H_u^2h-\svt^2 h=\sum_{j=1}^N u_j=u.
$$
Taking the inner product of \eqref{e2a} with $u$, we obtain 
$$
(h,u)=\sum_{j=1}^N \frac{\norm{u_j}^2}{\sv_j^2-\svt^2}=1,
$$
and so, using \eqref{a5}, 
$$
K_u^2h-\svt^2 h=H_u^2 h-\svt^2 h-(h,u)u=u-u=0.
$$
It follows that $\svt^2\in\sigma(K_u^2)$ and $h\in E_K(\svt)$. 

If $\svt>0$, then recalling that $u\not\perp h$, we obtain $\svt\in\Sigma_K(u)$. 
If $\svt=0$, then, observing that $h\in \Ran H_u$, we have $h\in E_K(0)$ and $u\not\perp h$, and so again 
 $0\in \Sigma_K(u)$.

In a similar way, for $\sv\in\Sigma_H'(u)$ we check that 
$$
H_u^2h=\sv^2 h, \quad\text{ where }\quad
h=\sum_{k=1}^{\wt N} \frac{\wt u_k}{\sv^2-\svt_k^2}. 
$$
This proves that $\Sigma_H'(u)\subset \Sigma_H(u)$. 

5) 
Let $P_{s_j}$ be the orthogonal projection onto $E_H(s_j)$, and let $\1_{s_j}=P_{s_j}\1$. 
Since $H_u$ commutes with $P_{s_j}$, we have 
\[
u_j=P_{s_j}H_u\1=H_u \1_{s_j},
\label{n7}
\] 
and therefore 
$$
\norm{u_j}^2=\norm{H_u \1_{s_j}}^2=\sv_j^2\norm{\1_{s_j}}^2. 
$$
From here we get the inequality
$$
\sum_{j=1}^N \frac{\norm{u_j}^2}{\sv_j^2}=\sum_{j=1}^N\norm{\1_{s_j}}^2\leq 1. 
$$
Now elementary analysis of the equation 
$$
\sum_{j=1}^N \frac{\norm{u_j}^2}{\sv_j^2-x}=1, \quad x\geq0
$$
together with the above inequality 
shows that it has exactly $N$ solutions which are all non-negative and interlace with $\sv_j^2$'s. 
Thus, we obtain that $\wt N=N$ and the interlacing condition \eqref{d1} holds. 
The proof of Theorem~\ref{lma.d1} is complete. 
\qed

\subsection{Proof of Theorem~\ref{inverse}(i)}

1)
By \eqref{n7}, we have 
$$
h_j=s_j^{-1}H_u u_j=s_j^{-1}H_u^2\1_{s_j}=s_j \1_{s_j}.
$$ 
Next, let us write \eqref{a5a} as
$$
H_u(h_j f)=s_j h_j \psi_j\overline{f}, \quad f\in \Ran H_{\psi_j}. 
$$
Applying this to $f=\1$, 
we get
$H_u h_j=s_j h_j\psi_j$, which can be rewritten as
$$
u_j=\psi_j h_j. 
$$
Similarly, applying \eqref{a5b} to the element $\1$, we get
$$
K_u \wt u_k=\svt_k \wt \psi_k \wt u_k, \quad \svt_k>0.
$$ 
We also have $K_u\wt u_N=0$ if $\svt_N=0$.

2) Let us prove \eqref{d7c}. 
Recall the decomposition \eqref{d2a}:
\[
u_j=\| u_j\|^2\sum_{k=1}^N\frac{\wt u_k}{\sv_j^2-\svt_k^2}\ .
\label{e2d}
\]
We would like to apply $H_u$ to both sides of this equation. 
We have, using \eqref{n10}, 
$$
H_u \wt u_k=SS^*H_u\wt u_k+(H_u\wt u_k,\1)\1
=SK_u\wt u_k+(H_u\1,\wt u_k)\1
=\wt s_k S\wt \psi_k\wt u_k+\norm{\wt u_k}^2\1, 
$$
and therefore, using \eqref{d4} at the last step, 
\begin{align}
\sv_jh_j(z)=H_u u_j(z)&=\| u_j\|^2\sum_{k=1}^N\frac{H_u\wt u_k(z)}{\sv_j^2-\svt_k^2}
\notag
\\
&=\| u_j\|^2\left (\sum_{k=1}^N\frac{z\svt_k\wt \psi_k(z)\wt u_k(z)+\norm{\wt u_k}^2}{\sv_j^2-\svt_k^2}\right )\ 
\notag
\\
&=\| u_j\|^2\left (\sum_{k=1}^N\frac{z\svt_k\wt \psi_k(z)\wt u_k(z)}{\sv_j^2-\svt_k^2}+1\right )\ .
\label{e2b}
\end{align}
On the other hand, multiplying \eqref{e2d} by $\sv_j$, we get
\[
\sv_j\psi_j(z)h_j(z)=\sv_j u_j(z)=\| u_j\|^2\sum_{k=1}^N\frac{\sv_j\wt u_k}{\sv_j^2-\svt_k^2}\ .
\label{e2c}
\]
Multiplying \eqref{e2b} by $\psi_j(z)$ and comparing with \eqref{e2c}, we obtain 
$$
\sum_{k=1}^N \frac{\sv_j-z\svt_k \wt \psi_k(z)\psi_j(z)}{\sv_j^2-\svt_k^2}\wt u_k(z)=\psi_j(z);
$$
recalling the vector notation of this theorem, this becomes
$$
\scrC(z)
\wt\bu(z)=\bpsi(z).
$$
This proves \eqref{d7c}.

3)
Let us briefly sketch the proof of \eqref{d7b}, which follows the same logic. 
Applying the operator $SK_u$ to both sides of \eqref{d2b} and observing that
$$
s_j SS^* h_j 
=
s_j h_j -s_j (h_j,\1)\1
=
s_j h_j -(H_u u_j,\1)\1
=
s_j h_j -\norm{u_j}^2\1, 
$$
we obtain (using \eqref{d3} at the last step)
$$
z\wt s_k \wt \psi_k \wt u_k 
=
\norm{\wt u_k}^2 \biggl(\sum_{j=1}^N \frac{s_j h_j}{s_j^2-\wt s_k^2}-1\biggr).
$$
Multiplying \eqref{d2b} by $z\wt s_k\wt \psi_k$ yields
$$
z\wt s_k \wt \psi_k \wt u_k 
=
\norm{\wt u_k}^2 \biggl(\sum_{j=1}^N \frac{z\wt s_k\wt \psi_k\psi_j h_j}{s_j^2-\wt s_k^2}\biggr).
$$
Comparing the last two expressions, we arrive at $\scrC^\top\bh=\bone$, which gives \eqref{d7b}. 

4)
Finally, \eqref{d7} follows from
$$
u=\sum_{k=1}^N \wt u_k\ .
$$
The proof of Theorem~\ref{inverse}(i) is complete.

\section{Surjectivity of the spectral map}\label{sec.C}

\subsection{Preliminaries}

In this section we prove Theorem~\ref{inverse}(ii).
Throughout the rest of the section, we fix  $N\in\bbN$, assume that 
$\{\sv_j\}_{j=1}^N$, $\{\svt_k\}_{k=1}^N$ are real numbers satisfying the interlacing
condition \eqref{d1}, 
and $\{\psi_j\}_{j=1}^N$, $\{\wt\psi_k\}_{k=1}^N$ are arbitrary inner functions.
We let $\scrC(z)$ be the matrix defined by \eqref{d7a} 
and $\bh$, $\wt \bu$, $u$ be defined by \eqref{d7b}, \eqref{d7c}, \eqref{d7}. 
Since $u\in\calH^\infty(\bbT)$, the Hankel operators $H_u$, $K_u$ 
are well defined and bounded. Our aim is to check that $H_u^2$, $K_u^2$ 
have finite spectra and the corresponding spectral data is given by 
$$
\Lambda(u)=\bigl(\{\sv_j\}_{j=1}^N, \{\svt_k\}_{k=1}^N, \{\psi_j\}_{j=1}^N, \{\wt \psi_k\}_{k=1}^N\bigr).
$$

As in Section~\ref{sec.A}, we set 
$$
\calT=\biggl\{\frac{1}{\sv_j^2-\svt_k^2}\biggr\}_{j,k=1}^N. 
$$
We denote by $D(\psi)$ 
the diagonal $N\times N$ matrix $\diag\{\psi_1(z),\dots,\psi_N(z)\}$ and similarly
for $D(\svt)$, $D(\sv)$ etc. 
We also set for brevity
$$
D(\sv\psi):=D(\sv)D(\psi), \quad D(\svt\wt\psi):=D(\svt)D(\wt\psi). 
$$
Note that if $\svt_N=0$, we can choose any inner function for $\wt \psi_N$ in our construction below;  it will always
appear in combination $\svt_N\wt\psi_N$, so this choice is not important. 

With this notation, the matrix $\scrC(z)$ of \eqref{d7a} can be rewritten  (similarly to \eqref{A22}) as
$$
\scrC(z)=D(\sv)\calT-zD(\psi)\calT D(\svt\wt \psi). 
$$
Recall the definitions \eqref{d7b}, \eqref{d7c} of $\bh$, $\wt \bu$: 
these are the unique solutions to the equations
\begin{align}
\scrC(z)^\top \bh&=\bone
\label{e4}
\\
\scrC(z)\wt\bu&=D(\psi)\bone. 
\label{e5}
\end{align}
Uniqueness is guaranteed by Theorem~\ref{thm.z4}. 

We  observe that 
the definition \eqref{d7} of $u(z)$ can be written in 
two alternative ways: 
\begin{align}
u(z)&=\jap{\scrC(z)^{-1}\bpsi(z),\bone}
=\jap{\wt\bu(z),\bone}, 
\label{n3}
\\
u(z)&=\jap{\bpsi(z),\overline{(\scrC(z)^\top)^{-1}\bone}}=\jap{\bpsi(z),\overline{\bh(z)}}=\jap{D(\psi)\bh,\bone}.
\label{n4}
\end{align}

\subsection{The action of $H_u$ and $K_u$}

The heart of the proof is the following algebraic lemma. 
\begin{lemma}\label{lma.e1}
For all $j,k=1,\dots,N$ we have the identities
\begin{align}
H_u(fh_j)=\sv_j\overline{f}\psi_jh_j, \quad \forall f\in \Ran H_{\psi_j}, 
\label{e2}
\\
K_u(g\wt u_k)=\svt_k\overline{g}\wt \psi_k \wt u_k, \quad \forall g\in \Ran H_{\wt \psi_k}.
\label{e3}
\end{align}
If $\svt_N=0$, then \eqref{e3} for $k=N$ should be understood as $K_u\wt u_N=0$. 
\end{lemma}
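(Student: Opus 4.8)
The goal is to verify the two action formulas \eqref{e2}, \eqref{e3} by direct computation, starting from the explicit formula \eqref{d7} for $u$ and the defining linear systems \eqref{e4}, \eqref{e5} for $\bh$, $\wt\bu$. The plan is to first record the key algebraic consequences of \eqref{e4} and \eqref{e5} at a point $z\in\bbD$, then use the concrete description of $\Ran H_{\psi_j}$ from \eqref{n11} together with the definition $H_u(fh_j)=P(u\,\overline{f\,h_j})$ to reduce each identity to a rational-function manipulation. I would treat \eqref{e2} in detail and then indicate that \eqref{e3} follows by the symmetric argument using $K_u=H_{S^*u}$ and $S^*u$.

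\medskip

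\textbf{Step 1: Extract scalar identities from the Cauchy system.} Writing out the $j$-th row of \eqref{e4} gives, for each $j$,
\[
\sum_{k=1}^N\frac{\sv_j-z\svt_k\psi_j(z)\wt\psi_k(z)}{\sv_j^2-\svt_k^2}\,h_k(z)=1,
\]
wait — I need to be careful about which system is which; \eqref{e4} is $\scrC^\top\bh=\bone$, so it is the $k$-th column of $\scrC$ that pairs with $\bh$. In any case, I would extract from \eqref{e4} and \eqref{e5} the two families of scalar relations
\[
\sum_{k}\frac{\sv_k-z\svt_j\psi_k(z)\wt\psi_j(z)}{\sv_k^2-\svt_j^2}\,h_k(z)=1,
\qquad
\sum_{k}\frac{\sv_j-z\svt_k\psi_j(z)\wt\psi_k(z)}{\sv_j^2-\svt_k^2}\,\wt u_k(z)=\psi_j(z),
\]
and record, as in Section~\ref{sec.B}, the reconstituted relations $u_j=\psi_j h_j$, $\wt u_k$ as in \eqref{d2a}--\eqref{d2b}, and $u=\sum_k\wt u_k=\sum_j u_j$ from \eqref{n3}. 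These are the raw material.

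\medskip

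\textbf{Step 2: Compute $H_u(fh_j)$ for $f\in\Ran H_{\psi_j}$.} Fix $f\in\Ran H_{\psi_j}$, so $\psi_j\overline f\in\calH^2$ by \eqref{n11}. I want to show $P(u\,\overline f\,\overline{h_j})=\sv_j\,\overline f\,\psi_j h_j$. The strategy is to use $u=\sum_k\wt u_k$ and the formula for $H_u\wt u_k$ derived in the proof of Theorem~\ref{inverse}(i), namely $H_u\wt u_k=\svt_k S\wt\psi_k\wt u_k+\norm{\wt u_k}^2\1$, together with the antilinearity of $H_u$ and the identity $H_u(fh_j)=\overline{f}\,H_u(\overline{\cdot})$-type manipulations — more precisely, since $H_u(ab)$ for $a$ a multiplier is not simply multiplicative, I would instead work directly: write $u\,\overline{f\,h_j}$ and use $u_k=\psi_k h_k$, the explicit $\wt u_k$, and the interlacing-driven partial-fraction collapse (the step where $\sum_k\norm{\wt u_k}^2/(\sv_j^2-\svt_k^2)=1$ or its $h$-analogue makes a ``$+1$'' appear and cancel) to reorganize the sum. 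The projection $P$ is handled by checking that the ``bad'' part — the piece lying in $z\overline{\calH^2}$ or involving $\overline z$ after multiplying by $\overline f$ — is annihilated, using that $\psi_j\overline f\in\calH^2$ and that each $h_j,\wt u_k$ is in $\calH^2$. This is essentially the computation \eqref{e2b}--\eqref{e2c} run in reverse and localized.

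\medskip

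\textbf{Step 3: The case $f=\1$, general $f$, and $K_u$.} For $f=\1$ the identity reduces to $H_u h_j=\sv_j\psi_j h_j$, i.e. $H_u h_j=\sv_j u_j/h_j\cdot h_j$... more cleanly, $u_j=\psi_j h_j$ already gives this; so the content is really the multiplier version for general $f\in\Ran H_{\psi_j}$, which is where \eqref{n11} (the characterization $\psi_j\overline f\in\calH^2$) does the work. Then \eqref{e3} is obtained from \eqref{e2} by the substitution $u\mapsto S^*u$, $\psi_j\mapsto\wt\psi_k$, $h_j\mapsto\wt u_k$, using $K_u=H_{S^*u}$ and the relation $H_uS=S^*H_u$ from \eqref{a4a}; alternatively one reruns Step 2 starting from $u=\sum_j u_j$ and the formula for $K_u u_j$. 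The degenerate case $\svt_N=0$: here $\wt u_N\in\Ran H_u$ and the claim $K_u\wt u_N=0$ follows since the reconstruction gives $\wt\psi_N\wt u_N$ always multiplied by $\svt_N=0$.

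\medskip

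\textbf{Main obstacle.} The hard part is Step 2 — specifically, controlling the Szeg\H o projection $P$ when multiplying $u$ (a rational function of $z$ and the inner functions) by the conjugate $\overline{f h_j}$, since $u$ itself is a ratio involving $\scrC(z)^{-1}$ and is not a polynomial. The key technical point will be to show that $u\,\overline{h_j}-\sv_j\psi_j h_j\,\overline{h_j}/\|h_j\|^2$-type combinations, after multiplication by $\overline f$, land in $(\calH^2)^\perp$; this is where the precise interlacing relations and the structure $u_j=\psi_j h_j$, $\wt u_k$ from \eqref{d2a}--\eqref{d2b} must be combined so that all terms not of the form $\sv_j\overline f\psi_j h_j$ either telescope away via the ``$=1$'' identities or are manifestly anti-analytic and killed by $P$. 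I expect this to require writing $u$ via \eqref{n3}--\eqref{n4} in both its $\bh$- and $\wt\bu$-forms and playing them against each other, exactly as in the two-sided computation of part (i), but now keeping track of the multiplier $f$ and the projection.
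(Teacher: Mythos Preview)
Your proposal has a genuine circularity problem. Lemma~\ref{lma.e1} lives in Section~\ref{sec.C}, the \emph{surjectivity} part: here $u$ is \emph{defined} by \eqref{d7} from abstract spectral data, and $\bh$, $\wt\bu$ are \emph{defined} as solutions of the linear systems \eqref{e4}, \eqref{e5}. At this point nothing is known about the spectral structure of $H_u$, $K_u$; establishing that structure is the whole purpose of the lemma. Yet in Step~2 you invoke ``the formula for $H_u\wt u_k$ derived in the proof of Theorem~\ref{inverse}(i), namely $H_u\wt u_k=\svt_k S\wt\psi_k\wt u_k+\norm{\wt u_k}^2\1$'', and in Step~1 you invoke \eqref{d2a}--\eqref{d2b}, and later the identity $\sum_k\norm{\wt u_k}^2/(\sv_j^2-\svt_k^2)=1$. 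All of these belong to the \emph{direct} problem: they were derived in Section~\ref{sec.B} under the assumption that $\wt u_k$ is the orthogonal projection of $u$ onto $E_K(\svt_k)$ and that $K_u\wt u_k=\svt_k\wt\psi_k\wt u_k$ --- which is exactly \eqref{e3}. You cannot use them here. (In particular, $\norm{\wt u_k}^2$ is not available: the paper only identifies it, indirectly, \emph{after} Lemmas~\ref{lma.e1} and~\ref{lma.e2}.) Likewise in Step~3, ``$u_j=\psi_j h_j$ already gives $H_u h_j=\sv_j\psi_j h_j$'' is simply false: $u_j=\psi_j h_j$ is a pointwise relation between functions and says nothing about how $H_u$ acts. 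And the ``substitution $u\mapsto S^*u$'' to get \eqref{e3} from \eqref{e2} does not go through, since $\wt\bu$ is defined by a different system than $\bh$ and is not the analogue of $\bh$ for the symbol $S^*u$.

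The paper's proof avoids all of this by working purely with the matrix algebra. The key step you are missing is the identity
\[
\scrC(z)^*D(\sv\psi)+\overline{z}\,D(\svt\wt\psi)^*\scrC(z)^\top=\jap{\cdot,D(\psi)^*\bone}\bone,
\]
which follows from the elementary Cauchy-matrix identity \eqref{A18}. Combining this with $\scrC(z)^\top\bh=\bone$ and \eqref{n4}, one obtains
\[
\scrC(z)^*\bigl(u\overline{\bh}-D(\sv\psi)\bh\bigr)=\overline{z}\,D(\svt\wt\psi)^*\bone;
\]
since $\scrC(z)^*$ is invertible (Theorem~\ref{thm.z4}) and the right-hand side is bounded and in $\overline{z}\,\overline{\calH^\infty}$, applying $P$ gives $H_u h_j=\sv_j\psi_j h_j$. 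The extension to general $f\in\Ran H_{\psi_j}$ then \emph{does} go as you suggest in Step~3, via $P(\overline{f}\,u\overline{h_j})=P(\overline{f}\,P(u\overline{h_j}))$ and \eqref{n11}; and \eqref{e3} is a parallel (not a ``substituted'') computation from \eqref{e5}. So your Step~3 multiplier argument is the right idea for the second half, but you need the matrix-algebra route --- not borrowed direct-problem formulas --- to get the $f=\1$ case.
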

\begin{proof}
1) 
We recall the crucial identity \eqref{A18}:
$$
D(\sv^2)\calT-\calT D(\svt^2)=\jap{\cdot,\bone}\bone.
$$
Using this identity, as a preparation for the calculations below let us compute
\begin{multline}
\scrC(z)^*D(\sv\psi)+\overline{z}D(\svt\wt\psi)^*\scrC(z)^\top
\\
=
(\calT^\top D(\sv)-\overline{z}D(\svt\wt\psi)^*\calT^\top D(\psi)^*)D(\sv\psi)
+
\overline{z}D(\svt\wt\psi)^*(\calT^\top D(s)-zD(\svt\wt\psi)\calT^\top D(\psi))
\\
=
\calT^\top D(\sv^2)D(\psi)-D(\svt^2)\calT^\top D(\psi)
=(\calT^\top D(\sv^2)-D(\svt^2)\calT^\top)D(\psi)
=\jap{\cdot,D(\psi)^*\bone}\bone.
\label{e11}
\end{multline}

2)
First we aim to prove \eqref{e2} for $f=1$; in our vector notation 
this identity can be written as
\[
\proj(u\overline{\bh})=D(\sv\psi)\bh.
\label{e6}
\]
First, taking the complex conjugate of equation  \eqref{e4} and multiplying by the scalar $u(z)$, we obtain 
$$
\scrC(z)^*(u\overline{\bh})=u\bone. 
$$
Next, using \eqref{e11}, \eqref{e4} and \eqref{n4}, we obtain
\begin{multline*}
\scrC(z)^*D(\sv\psi)\bh
=
\jap{\bh,D(\psi)^*\bone}\bone-\overline{z}D(\svt\wt\psi)^*\scrC(z)^\top\bh
\\
=
\jap{D(\psi)\bh,\bone}\bone-\overline{z}D(\svt\wt\psi)^*\bone
=
u\bone-\overline{z}D(\svt\wt\psi)^*\bone.
\end{multline*}
Putting the last two equations together, we get
$$
\scrC(z)^*(u\overline{\bh}-D(\sv\psi)\bh)=\overline{z}D(\svt\wt\psi)^*\bone.
$$
By Theorem~\ref{thm.z4}, $\scrC(z)$ is invertible and so we can write
$$
u\overline{\bh}-D(\sv\psi)\bh=\overline{z}(\scrC(z)^*)^{-1}D(\svt\wt\psi)^*\bone.
$$
Let us apply the the Szeg\H o projection $\proj$ to both sides of this equation. 
Clearly, the right hand side disappears, and we obtain \eqref{e6}. 

Finally,
from \eqref{e6} we obtain for any
$f\in\Ran H_{\psi_j}$:
\[
H_u(fh_j)
=
\proj(\overline{f}u\overline{h_j})
=
\proj(\overline{f}\proj(u\overline{h_j}))
=
\proj(\overline{f}H_u h_j)=\sv_j\proj(\overline{f}\psi_jh_j)
=\sv_j\overline{f}\psi_jh_j,
\label{e15}
\]
which is exactly \eqref{e2}. 
At the last step, we have used the definition \eqref{n11} of $\Ran H_{\psi_j}$. 
This calculation makes sense for any $f\in\Ran H_{\psi_j}$, since
$h_j\in \calH^\infty(\bbT)$.

3) 
Let us prove \eqref{e3} with $g=1$; this can be written as
\[
K_u\wt\bu=D(\svt\wt \psi)\wt\bu.
\label{e12}
\]
First, taking the complex conjugate of \eqref{e5} and multiplying by $\overline{z}u(z)$, we get
\[
\overline{\scrC(z)}\overline{z}u\overline{\wt\bu}=\overline{z}uD(\psi)^*\bone.
\label{e14}
\]
Next, we transform \eqref{e11} by taking adjoints and multiplying by $\overline{z}$; this gives
$$
\overline{z}D(\sv\psi)^*\scrC(z)+\overline{\scrC(z)}D(\svt\wt\psi)=\overline{z}\jap{\cdot,\bone}D(\psi)^*\bone. 
$$
Using the last equation, \eqref{e5} and \eqref{n3}, we get
\begin{multline*}
\overline{\scrC(z)}D(\svt\wt\psi)\wt\bu
=
-\overline{z}D(\sv\psi)^*\scrC(z)\wt\bu+\overline{z}\jap{\wt\bu,\bone}D(\psi)^*\bone
\\
=
-\overline{z}D(\sv\psi)^*D(\psi)\bone+\overline{z}uD(\psi)^*\bone
=
-\overline{z}D(\sv)\bone+\overline{z}uD(\psi)^*\bone.
\end{multline*}
Subtracting the last equation from \eqref{e14} we obtain
$$
\overline{\scrC(z)}(\overline{z}u\overline{\wt\bu}-D(\svt\wt\psi)\wt\bu)
=\overline{z}D(s)\bone. 
$$
By Theorem~\ref{thm.z4}, $\overline{\scrC(z)}$ is invertible and so we can write
$$
\overline{z}u\overline{\wt\bu}-D(\svt\wt\psi)\wt\bu
=\overline{z}(\overline{\scrC(z)})^{-1}D(s)\bone. 
$$
Let us apply the the Szeg\H o projection $\proj$ to both sides of this equation. 
Clearly, the right hand side disappears, and we obtain \eqref{e12}. 

Finally, 
similarly to \eqref{e15} from here we get for all $g\in \Ran H_{\wt \psi_k}$:
$$
K_u(g\wt u_k)
=
\proj(\overline{g}u\overline{z}\overline{\wt u_k})
=
\proj(\overline{g}\proj(u\overline{z}\overline{\wt u_k}))
=
\proj(\overline{g} K_u \wt u_k)
=
\svt_k \proj(\overline{g}\wt\psi_k\wt u_k)
=
\svt_k \overline{g}\wt \psi_k\wt u_k,
$$
which is the required formula \eqref{e3}.   
\end{proof}

\subsection{Relations between $u_j$ and $\wt u_k$}

\begin{lemma}\label{lma.e2}
Let $u_j=\psi_j h_j$ for all $j$, and let the parameters $\tau_j>0$, $\varkappa_k>0$  be as defined by \eqref{A8}. 
Then for all $j,k$ we have
\[
u_j=\tau_j^2\sum_{k=1}^N\frac{\wt u_k}{\sv_j^2-\svt_k^2},
\quad
\wt u_k=\varkappa_k^2\sum_{j=1}^N\frac{u_j}{\sv_j^2-\svt_k^2}.
\label{e3a}
\]
\end{lemma}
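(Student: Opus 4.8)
The plan is to derive the two relations in \eqref{e3a} from the defining equations \eqref{e4}, \eqref{e5} together with the algebra of Cauchy matrices developed in Section~\ref{sec.A}, specifically the identity \eqref{A7} for $\calT^{-1}$ in terms of $D(\tau^2)$, $D(\varkappa^2)$, and the transposed Cauchy matrix $\calT^\top$. The point is that $\wt\bu$ and $\bh$ (hence $\wt\bu$ and $\bu = D(\psi)\bh$) are linked through $\scrC(z)$, and $\scrC(z) = D(\sv)\calT - zD(\psi)\calT D(\svt\wt\psi)$ is a ``twisted'' version of $\calT$; the relations \eqref{e3a} should fall out once we move the $D(\sv)$, $D(\psi)$ factors around and invert $\calT$ using Lemma~\ref{lma.A1}.

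Concretely, first I would establish the second identity in \eqref{e3a}. Starting from \eqref{e5}, $\scrC(z)\wt\bu = D(\psi)\bone$, I substitute the factored form of $\scrC(z)$ to get
$$
D(\sv)\calT\wt\bu - zD(\psi)\calT D(\svt\wt\psi)\wt\bu = D(\psi)\bone.
$$
Now $D(\svt\wt\psi)\wt\bu$ is, componentwise, $\svt_k\wt\psi_k\wt u_k$, and by Lemma~\ref{lma.e1} (equation \eqref{e12}, i.e. \eqref{e3} with $g=1$) this equals $K_u\wt\bu$, which in turn is $\proj(u\,\overline{z}\,\overline{\wt\bu})$ --- but more usefully, I want to re-express $z\svt_k\wt\psi_k\wt u_k$ using the relation $H_u\wt u_k = \wt s_k S\wt\psi_k\wt u_k + \|\wt u_k\|^2\bone$ that already appeared in Section~\ref{sec.B}. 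Actually the cleanest route: multiply \eqref{e5} on the left by $\calT^{-1}D(\sv)^{-1}$ and use $\calT^{-1} = D(\varkappa^2)\calT^\top D(\tau^2)$ from \eqref{A7}, turning the equation into one where the $k$-th component of $\wt\bu$ is expressed as $\varkappa_k^2$ times a sum over $j$ of something divided by $\sv_j^2 - \svt_k^2$; comparing with \eqref{e4} (to identify the $u_j$'s) should yield exactly $\wt u_k = \varkappa_k^2\sum_j u_j/(\sv_j^2 - \svt_k^2)$. The first identity in \eqref{e3a} is obtained symmetrically: multiply \eqref{e4} (which reads $\scrC(z)^\top\bh = \bone$) on the left by $\calT^{-1}D(\sv)^{-1}$ after transposing the factored form $\scrC(z)^\top = \calT^\top D(\sv) - zD(\svt\wt\psi)\calT^\top D(\psi)$, and use $u_j = \psi_j h_j$ together with $\calT^{-1} = D(\varkappa^2)\calT^\top D(\tau^2)$ and the companion identity \eqref{A20a}, namely $\bone = \calT^\top D(\tau^2)\bone$, to collapse the constant terms.

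The main obstacle I anticipate is bookkeeping the diagonal twist factors $D(\psi)$, $D(\wt\psi)$, $D(\sv)$, $D(\svt)$ correctly: because these matrices do not commute with $\calT$ or $\calT^\top$, one cannot simply mimic the scalar Cauchy-matrix inversion of Lemma~\ref{lma.A1}, and the naive manipulation produces cross terms involving $z\psi_j\wt\psi_k$ that have to be matched against the $z$-dependent parts of $\scrC$ and shown to cancel. The key technical input that makes this work is precisely the ``twisted'' identity \eqref{e11},
$$
\scrC(z)^*D(\sv\psi) + \overline{z}D(\svt\wt\psi)^*\scrC(z)^\top = \jap{\cdot,D(\psi)^*\bone}\bone,
$$
together with its variants, which already encode the way the $z$-terms conspire; combined with the explicit residue formulas \eqref{A13}--\eqref{A14} that define $\tau_j^2$, $\varkappa_k^2$ and the interpolation identities \eqref{A20}, \eqref{A20a}, this should let me push the computation through. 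An alternative, possibly cleaner, route avoiding explicit matrix inversion: apply $H_u^2$ to $u_j$ using Lemma~\ref{lma.e1} to show $H_u^2 u_j = \sv_j^2 u_j$ (so $u_j \in E_H(\sv_j)$) and similarly $K_u^2\wt u_k = \svt_k^2\wt u_k$; then the relations \eqref{e3a} become consequences of the orthogonal-decomposition argument of Theorem~\ref{lma.d1}, with the constants $\tau_j^2 = \|u_j\|^2$, $\varkappa_k^2 = \|\wt u_k\|^2$ identified a posteriori --- though verifying that identification (i.e. that $\|u_j\|^2$ really equals the $\tau_j^2$ of \eqref{A8}) would still require essentially the same Cauchy-matrix algebra, so I would present the direct computational proof first and remark on this interpretation afterwards.
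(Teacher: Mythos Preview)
Your overall strategy --- relate $\wt\bu$ and $\bh$ through the defining equations \eqref{e4}, \eqref{e5} and the Cauchy-matrix identity \eqref{A7} --- is the same as the paper's. But the concrete manipulations you propose miss the key step and point to the wrong auxiliary identity.

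Multiplying \eqref{e5} on the left by $\calT^{-1}D(\sv)^{-1}$ does not lead anywhere clean: the $z$-term becomes $z\,\calT^{-1}D(\sv^{-1}\psi)\calT D(\svt\wt\psi)$, and $\calT^{-1}D(\cdot)\calT$ is not diagonal, so you are stuck with exactly the cross terms you warn about. You then cite \eqref{e11} as the tool that makes these cancel, but \eqref{e11} relates $\scrC(z)^*$ and $\scrC(z)^\top$ and is tailored to the proof of Lemma~\ref{lma.e1}; it plays no role here. The paper's actual device is different and simpler: from \eqref{A7} one has that $\calT^\top D(\tau^2)\calT = D(\varkappa^{-2})$ is \emph{diagonal}, and hence commutes with any diagonal matrix. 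This yields the intertwining identity
\[
\calT^\top D(\tau^2)D(\psi)^*\,\scrC(z) \;=\; \scrC(z)^\top\, D(\psi)^*D(\tau^2)\calT,
\]
obtained by expanding $\scrC(z)=D(\sv)\calT - zD(\psi)\calT D(\svt\wt\psi)$ and moving $D(\tau^2)\calT$ across the diagonal factors. Applying both sides to $\wt\bu$, the left side collapses via \eqref{e5} and \eqref{A20a} to $\bone$, so $\scrC(z)^\top\bigl(D(\psi)^*D(\tau^2)\calT\wt\bu\bigr)=\bone$; uniqueness in \eqref{e4} gives $\bh = D(\psi)^*D(\tau^2)\calT\wt\bu$, i.e.\ the first relation in \eqref{e3a}. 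The second relation follows by applying $D(\varkappa^2)\calT^\top$ and using \eqref{A7} again. You have all the ingredients listed, but not this intertwining step, and without it the ``direct computational proof'' you describe does not close.

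Your alternative route via Theorem~\ref{lma.d1} is circular at this point of the argument: that theorem's derivation of \eqref{d2a}--\eqref{d2b} relies on the full eigenspace decomposition \eqref{d5} of $\Ran H_u$, which presupposes that $H_u^2$ has finite spectrum. In Section~\ref{sec.C} this is precisely what remains to be proved, and Lemma~\ref{lma.e2} is one of the inputs (via Lemma~\ref{cr.e2}) used to establish it. So you cannot invoke Theorem~\ref{lma.d1} here; the algebraic proof is genuinely needed.
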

\begin{proof}
First we recall the important identity \eqref{A7}:
\[
\calT^{-1}=D(\varkappa^2)\calT^\top D(\tau^2). 
\label{n1}
\]
From this identity it follows that $\calT^\top D(\tau^2)\calT$ is diagonal, and therefore commutes 
with other diagonal matrices. Using this fact, as a preparation we compute
\begin{multline*}
\calT^\top D(\tau^2)D(\psi)^*\scrC(z)
=
\calT^\top D(\tau^2)D(\psi)^*D(\sv)\calT
-z\calT^\top D(\tau^2)\calT D(\svt\wt\psi)
\\
=
\calT^\top D(\sv)D(\psi)^*D(\tau^2)\calT
-z D(\svt\wt\psi)\calT^\top D(\tau^2)\calT
\\
=
\calT^\top D(\sv)D(\psi)^*D(\tau^2)\calT
-z D(\svt\wt\psi)\calT^\top D(\psi)D(\psi)^* D(\tau^2)\calT
=
\scrC(z)^\top D(\psi)^*D(\tau^2)\calT. 
\end{multline*}
Let us apply the last identity to the element $\wt\bu$:
$$
\calT^\top D(\tau^2)D(\psi)^*\scrC(z)\wt\bu
=
\scrC(z)^\top D(\psi)^*D(\tau^2)\calT\wt\bu.
$$
For the left hand side here, using \eqref{e5} and \eqref{A20a}, we get
$$
\calT^\top D(\tau^2)D(\psi)^*\scrC(z)\wt\bu
=
\calT^\top D(\tau^2)D(\psi)^*D(\psi)\bone
=
\calT^\top D(\tau^2)\bone
=\bone.
$$
Combining the last two equations, we get
$$
\scrC(z)^\top D(\psi)^*D(\tau^2)\calT\wt\bu=\bone.
$$
Now recall that $\bh$ is the unique solution to 
$$
\scrC(z)^\top \bh=\bone.
$$
It follows that 
$$
\bh=D(\psi)^*D(\tau^2)\calT\wt\bu,
$$
which can be rewritten as
\[
D(\psi)\bh=D(\tau^2)\calT\wt\bu.
\label{n2}
\]
Recalling that $u_j=\psi_j h_j$, we see that this is the first one of the  required equations \eqref{e3a}.

Now the second equation in \eqref{e3a} follows by using 
formula \eqref{n1}. 
Indeed, multiplying \eqref{n2} by $D(\varkappa^2)\calT^{\top}$, we obtain 
$$
D(\varkappa^2)\calT^{\top} D(\psi)\bh=\wt\bu,
$$
which is the second equation in \eqref{e3a} written in vector notation. 
\end{proof}

\subsection{Completing the proof}

\begin{lemma}\label{cr.e2}
Let $u$ be given by formula \eqref{d7} of Theorem~\ref{inverse}. 
Then 
$$
\{\sv_j\}_{j=1}^N\subset \Sigma_H(u),\quad 
\{\svt_k\}_{k=1}^N \subset \Sigma_K(u).
$$ 
For all $j$, the function
$u_j\not=0$ is the orthogonal projection of $u$ onto $E_H(\sv_j)$ and  for all $k$, the function
$\wt u_k\not=0$ is the orthogonal projection of $u$ onto $E_K(\svt_k)$.
\end{lemma}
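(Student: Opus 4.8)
The plan is to leverage Lemma~\ref{lma.e1} and Lemma~\ref{lma.e2} together with the general structural results quoted in Section~\ref{sec.a} (in particular Theorem~\ref{thm.z1} and Proposition~\ref{lma.z1}) to identify the $\sv_j$, $\svt_k$ as genuine singular values and the $u_j$, $\wt u_k$ as the correct spectral projections. First I would observe that $u_j=\psi_j h_j\neq0$ and $\wt u_k\neq 0$: this follows because $\scrC(z)$ is invertible on $\overline{\bbD}$ by Theorem~\ref{thm.z4}, so $\bh$ and $\wt\bu$ are nonzero vectors of $\calH^\infty$ functions; more precisely, if some $h_j$ vanished identically, taking $z=1$ in \eqref{e4} would contradict that $\scrC(1)^\top$ is invertible with $\bone$ in its range, and similarly for $\wt u_k$ via \eqref{e5}. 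Next, applying \eqref{e2} with $f=1$ (legitimate since $1\in\Ran H_{\psi_j}$) gives $H_u u_j = H_u(\psi_j h_j \cdot \overline{\psi_j}/\overline{\psi_j})$—more directly, write $u_j = \psi_j h_j$ and note $h_j\in\Ran H_{\psi_j}$? Actually the cleaner route: from \eqref{e2}, for any $f\in\Ran H_{\psi_j}$ we have $H_u(f h_j)=\sv_j\overline{f}\psi_j h_j$; iterating, $H_u^2(f h_j)=H_u(\sv_j\overline{f}\psi_j h_j)$, and since $\overline{f}\psi_j\in\Ran H_{\psi_j}$ whenever $f\in\Ran H_{\psi_j}$ (by the description \eqref{n11} and the fact that $H_{\psi_j}$ is an involution on its range), we get $H_u^2(fh_j)=\sv_j^2 fh_j$. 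Thus $h_j\Ran H_{\psi_j}\subset E_H(\sv_j)$, and in particular $u_j=\psi_j h_j = h_j\cdot (H_{\psi_j}1)\in h_j\Ran H_{\psi_j}$? — I need $\psi_j\cdot 1$, but $1\in\Ran H_{\psi_j}$ and $H_{\psi_j}1=\psi_j$, so indeed $u_j = h_j H_{\psi_j}1 \in h_j\Ran H_{\psi_j}\subset E_H(\sv_j)$. Hence $\sv_j^2\in\sigma(H_u^2)$ and $u_j\in E_H(\sv_j)$; similarly \eqref{e3} gives $\wt u_k\in E_K(\svt_k)$.

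Having placed $u_j\in E_H(\sv_j)$ and $\wt u_k\in E_K(\svt_k)$, I would then show these are the orthogonal projections of $u$ onto the respective eigenspaces. The key point is that $u=\sum_k\wt u_k=\sum_j u_j$ (the first by \eqref{n3}, the second by summing \eqref{n4} — or directly from \eqref{e3a}, since summing the second relation over $k$ and the first over $j$ and comparing shows $\sum_j u_j=\sum_k\wt u_k$; alternatively $\sum_j u_j=\jap{D(\psi)\bh,\bone}=u=\jap{\wt\bu,\bone}=\sum_k\wt u_k$). Since $u_j\in E_H(\sv_j)$ for each $j$ and the $\sv_j$ are distinct, the $E_H(\sv_j)$ are mutually orthogonal, so $u_j$ is exactly the orthogonal projection of $\sum_i u_i=u$ onto $E_H(\sv_j)$ — provided $u$ has no further component in $E_H(\sv_j)$ coming from elsewhere, but there is nothing else: $u=\sum_i u_i$ is a decomposition into the pairwise orthogonal subspaces $E_H(\sv_i)$, so projecting onto $E_H(\sv_j)$ kills all $u_i$ with $i\neq j$ and fixes $u_j$. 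The same argument with \eqref{e3a} and the mutual orthogonality of the $E_K(\svt_k)$ shows $\wt u_k$ is the orthogonal projection of $u$ onto $E_K(\svt_k)$. Finally, $u\not\perp E_H(\sv_j)$ because the projection $u_j$ is nonzero, so $\sv_j\in\Sigma_H(u)$; likewise $\svt_k\in\Sigma_K(u)$ (for $\svt_k>0$; for $\svt_N=0$ we use $E_K(0)=\Ker K_u\cap\Ran H_u$ together with $\wt u_N\in\Ran H_u$ — which holds since $u\in\Ran H_u$ and we will see $\Ran H_u$ is closed once the spectrum is shown finite, or more directly $\wt u_N$ lies in the span of the $u_j=H_u\1_{s_j}\in\Ran H_u$ by \eqref{e3a} — and $K_u\wt u_N=0$ by \eqref{e3}).

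The main obstacle I anticipate is the orthogonality bookkeeping: to claim that $u_j$ is literally the orthogonal projection of $u$ onto $E_H(\sv_j)$ I must know that the decomposition $u=\sum_i u_i$ is an orthogonal one, i.e. that the $u_i$ sit in mutually orthogonal subspaces. This orthogonality is automatic once we know $u_i\in E_H(\sv_i)$ with distinct $\sv_i$, because eigenspaces of the self-adjoint operator $H_u^2$ for distinct eigenvalues are orthogonal — so the obstacle is really just making sure each $u_i$ genuinely lands in the right eigenspace, which is exactly what Lemma~\ref{lma.e1} delivers after the short argument that $\overline f\psi_j\in\Ran H_{\psi_j}$ when $f\in\Ran H_{\psi_j}$. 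A secondary point requiring care is that $E_H(\sv_j)$ and $E_K(\svt_k)$ must be nonzero and the listed values must be \emph{all} the singular values on $\Ran H_u$; the ``all'' part, and hence the finiteness of the spectrum, is naturally deferred to the next step of the argument (using Theorem~\ref{lma.d1}'s interlacing characterization \eqref{d3}--\eqref{d4} to rule out extra singular values), so here I would only assert the inclusions $\{\sv_j\}\subset\Sigma_H(u)$, $\{\svt_k\}\subset\Sigma_K(u)$ exactly as stated in the lemma.
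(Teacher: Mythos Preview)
Your overall structure is essentially the paper's: use Lemma~\ref{lma.e1} to place $u_j\in E_H(\sv_j)$ and $\wt u_k\in E_K(\svt_k)$, invoke pairwise orthogonality of eigenspaces of the self-adjoint $H_u^2$, $K_u^2$, and read off from $u=\sum_j u_j=\sum_k\wt u_k$ that these are the required projections. That part is fine.

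The genuine gap is your nonvanishing argument. Invertibility of $\scrC(z)^\top$ tells you only that $\bh(z)=(\scrC(z)^\top)^{-1}\bone$ is the \emph{unique} solution of \eqref{e4}; it says nothing about whether individual coordinates of that solution are zero. An invertible $N\times N$ matrix can certainly send a vector with a vanishing coordinate to $\bone$. So ``if some $h_j$ vanished identically, taking $z=1$ in \eqref{e4} would contradict invertibility'' is simply false, and the parallel claim for $\wt u_k$ via \eqref{e5} fails for the same reason. Since the conclusions $\{\sv_j\}\subset\Sigma_H(u)$ and $\{\svt_k\}\subset\Sigma_K(u)$ hinge precisely on $u_j\neq0$, $\wt u_k\neq0$, this is not a cosmetic issue.

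The paper closes this gap using Lemma~\ref{lma.e2} (which you did not use at all). Taking the inner product of the first relation in \eqref{e3a} with $\wt u_k$ and of the second with $u_j$, and using the pairwise orthogonality already established, one gets
\[
(\sv_j^2-\svt_k^2)(u_j,\wt u_k)=\tau_j^2\norm{\wt u_k}^2=\varkappa_k^2\norm{u_j}^2,
\]
so that $\norm{u_j}^2/\tau_j^2=\norm{\wt u_k}^2/\varkappa_k^2$ is a constant independent of $j,k$. Since $u\not\equiv0$, at least one of these norms is nonzero, hence all are. Alternatively, you could salvage your line of attack by evaluating at $z=0$: there $\scrC(0)=D(\sv)\calT$, and a short computation with \eqref{A7} and \eqref{A13} gives $h_j(0)=\sv_j^{-1}\tau_j^2>0$; then $u_j=\psi_j h_j\not\equiv0$, and nonvanishing of each $\wt u_k$ follows from the second relation in \eqref{e3a} together with the orthogonality of the $u_j$. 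Either way, some concrete input beyond mere invertibility of $\scrC$ is needed.
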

\begin{proof}
1)
Iterating \eqref{e2} with $f=\psi_j$ and \eqref{e3} with $g=1$, we obtain
$$
H_u^2 u_j=\sv_j^2 u_j, 
\quad
K_u^2 \wt u_k=\svt_k^2 \wt u_k;
$$
that is, $u_j\in E_H(\sv_j)$ and $\wt u_k\in E_K(\svt_k)$. 
Since $H_u^2$ and $K_u^2$ are self-adjoint, this means, 
in particular, that all $u_j$ are pairwise orthogonal and similarly all $\wt u_k$ are 
pairwise orthogonal. 

Observe that \eqref{n3}, \eqref{n4} can be written as
$$
u(z)=\sum_{j=1}^N u_j(z)=\sum_{k=1}^N \wt u_k(z). 
$$
It follows that $u_j$ is the orthogonal projection of $u$ onto $E_H(\sv_j)$ and 
$\wt u_k$ is the orthogonal projection of $u$ onto $E_K(\svt_k)$. 

2) 
To complete the proof, it remains to check that $u_j\not=0$ and $\wt u_k\not=0$ for all
$j$, $k$. 
Taking the inner product with $\wt u_k$ in the first equation \eqref{e3a}, we obtain
$$
(\sv_j^2-\svt_k^2)(u_j,\wt u_k)=\tau_j^2\norm{\wt u_k}^2. 
$$
Taking the inner product with $u_j$ in the second equation \eqref{e3a}, we similarly get
$$
(\sv_j^2-\svt_k^2)(u_j,\wt u_k)=\varkappa_k^2\norm{u_j}^2. 
$$
Comparing these two equations yields
$$
\frac{\norm{u_j}^2}{\tau_j^2}
=
\frac{\norm{\wt u_k}^2}{\varkappa_k^2}
$$
for all $j,k$. Since we know that the norms above are non-zero 
at least for some $j,k$, it follows that they are non-zero for all $j,k$. 
(In fact, it is easy to show that $\norm{\wt u_k}^2=\varkappa_k^2$ and 
$\norm{u_j}^2=\tau_j^2$ for all $j,k$, but we don't need this.) 
\end{proof}

We are almost ready to complete the proof of Theorem~\ref{inverse}(ii). 
We need one general lemma:

\begin{lemma}\label{general}
Let $V\subset \calH^2(\bbT)$ be a subspace 
such that $H_u:V\to V$ is onto, $K_u(V)\subset V$, and $V\perp u$. Then $V=\{ 0\}$.
\end{lemma}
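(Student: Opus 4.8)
The plan is to show that the subspace $V$ must be trivial by exploiting the rank-one relation \eqref{a5} connecting $H_u^2$ and $K_u^2$. First I would record the key structural consequences of the hypotheses. Since $H_u$ maps $V$ onto $V$, in particular $V\subset \Ran H_u$, so by \eqref{n9} we have $V\cap\Ker H_u=V^\perp\cap(\Ran H_u)\cap V$... more usefully, $H_u|_V$ is a surjection of $V$ onto itself. I would like to deduce that $H_u(V)=V$ together with $K_u(V)\subset V$ and $V\perp u$ forces $H_u|_V=K_u|_V$: indeed, by \eqref{a5}, for any $g\in V$ we have $K_u^2 g=H_u^2 g-(g,u)u=H_u^2 g$ since $g\perp u$; thus $H_u^2=K_u^2$ on $V$. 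But I want more than the squares agreeing.

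The cleaner route: recall $K_u=H_uS=S^*H_u$ from \eqref{a4a}, so for $g\in V$, $H_u g - S K_u g = H_u g - S S^* H_u g = (H_u g, \1)\1$ (the component of $H_u g$ along the constants). Now the hard part is to leverage surjectivity of $H_u$ on $V$ to run an iteration. Given $g\in V$, write $g=H_u g_1$ with $g_1\in V$, then $g_1=H_u g_2$ with $g_2\in V$, and so on, producing a sequence $g_n\in V$ with $H_u g_n = g_{n-1}$. Since $H_u$ and $K_u$ are bounded and $V\perp u$, I expect to show that $\norm{g_n}$ cannot decay, or alternatively that one can build from such a backward orbit an element on which $H_u^2$ or $K_u^2$ behaves in a way incompatible with boundedness — unless $V=\{0\}$. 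Concretely, since $V\perp u$, the operator $H_u|_V$ is a bijection of $V$ onto $V$ (surjective by hypothesis, and injective because $\Ker H_u\cap V\subset \Ran H_u\cap (\Ran H_u)^\perp=\{0\}$ using $V\subset\Ran H_u$ and \eqref{n9}), and likewise one checks $K_u|_V$ coincides, via $S$, with $H_u|_V$ shifted. The contradiction I anticipate is that a bounded operator which is a bijection of an infinite-dimensional space onto itself together with the shift relation $H_u g = S K_u g$ on $V$ (the constant term dropping out because... here one must be careful since $\1\notin V$ in general) would force $V$ to contain arbitrarily "shifted" elements, contradicting $V\subset\calH^2$.

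I expect the main obstacle to be making the shift argument rigorous: from $H_u(V)=V$ and $K_u=S^*H_u$ one gets $S^*(H_u(V))\supset$ something inside $V$, and iterating $S^*$ indefinitely on a fixed nonzero element of $V$ should eventually contradict membership in $\calH^2$ (an element of the Hardy space cannot survive infinitely many applications of $S^*$ while keeping its norm bounded below). More precisely, I would argue: pick $0\neq v\in V$; since $H_u|_V$ is onto, choose $w\in V$ with $H_u w=v$; then $K_u w=S^*H_u w=S^*v$, and $K_u w\in V$, so $S^*v\in V$ (modulo checking the constant term vanishes, which follows since $v=H_u w$ and one can track $(v,\1)$). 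Iterating, $(S^*)^n v\in V$ for all $n$, with each step controlled. But for $v\in\calH^2$, $(S^*)^n v\to 0$, and in fact if $v\not\equiv 0$ there is a smallest $m$ with $\wh v(m)\neq 0$, and $(S^*)^m v$ has nonzero constant term — yet that iterate lies in $V\perp u$, which combined with $u=H_u\1$ and the surjectivity gives the contradiction. I would close the argument by showing that an element of $V$ with nonzero constant term is impossible: if $f\in V$ with $(f,\1)\neq 0$, then since $H_u|_V$ is onto, $f=H_u\phi$ for $\phi\in V$, and then $(f,\1)=(H_u\phi,\1)=(H_u\1,\phi)=(u,\phi)=0$ because $\phi\in V\perp u$ — contradiction. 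Hence no nonzero element of $V$ can have a nonzero lowest Taylor coefficient after finitely many $S^*$-shifts, forcing $V=\{0\}$.
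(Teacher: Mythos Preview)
Your proposal is correct and, once the exploratory detours are stripped away, follows exactly the paper's route: use surjectivity of $H_u|_V$ together with $K_u=S^*H_u$ to get $S^*(V)\subset V$, use $V\perp u$ and the symmetry \eqref{n10} to get $V\perp\1$, and then conclude $V=\{0\}$ since an $S^*$-invariant subspace orthogonal to $\1$ is trivial. The paper's proof is just the clean three-line version of your final paragraph.
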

\begin{proof}
Given $h\in V$, write $h=H_uh'$ with $h'\in V$. Then $S^*h=K_uh'\in V$. Furthermore, 
$$
0=(h', u)=(h',H_u \1)=
(\1, H_u h')=(\1, h)\ .
$$ 
Hence $S^*(V)\subset V$ and $V\perp \1$. 
Thus we obtain $V=\{0\}$. 
\end{proof}

\begin{proof}[Proof of Theorem~\ref{inverse}(ii)]

1)
First let us consider the subspaces 
\begin{align*}
W_H&=\bigl(\oplus_{j=1}^N E_H(\sv_j)\bigr)\oplus\bigr(\oplus_{k=1}^N E_H(\svt_k)\bigr),
\\
W_K&=\bigl(\oplus_{j=1}^N E_K(\sv_j)\bigr)\oplus\bigr(\oplus_{k=1}^N E_K(\svt_k)\bigr),
\end{align*}
and prove that $W_H=W_K$. 
We know from Lemma~\ref{cr.e2} that 
$u_j$ is the orthogonal projection of $u$ onto $E_H(\sv_j)$ and 
$\wt u_k$ is the orthogonal projection of $u$ onto $E_K(\svt_k)$. 
By Proposition~\ref{lma.z1}, we have
\begin{align*}
E_H(\sv_j)&=\Span\{u_j\}\oplus E_K(\sv_j),
\\
E_K(\svt_k)&=\Span\{\wt u_k\}\oplus E_H(\svt_k)
\end{align*}
for each $j$ and $k$. It follows that 
\begin{align*}
W_H
&=
\bigl(\oplus_{j=1}^N \Span\{u_j\}\bigr)
\oplus
\bigl(\oplus_{j=1}^N E_K(\sv_j)\bigr)
\oplus
\bigl(\oplus_{k=1}^N E_H(\svt_k)\bigr),
\\
W_K
&=
\bigl(\oplus_{k=1}^N \Span\{\wt u_k\}\bigr)
\oplus
\bigl(\oplus_{j=1}^N E_K(\sv_j)\bigr)
\oplus
\bigl(\oplus_{k=1}^N E_H(\svt_k)\bigr).
\end{align*}
Relations \eqref{e3a} mean that 
$$
\oplus_{j=1}^N \Span\{u_j\}=\oplus_{k=1}^N \Span\{\wt u_k\}.
$$
It follows that $W_H=W_K$. 

2) 
Let us check that 
$$
\Ran H_u=W_H,
$$
where $W_H$ is the subspace defined on the previous step. 
For $\eps>0$, let 
$$
W_\eps=\Ran \chi_{(\eps,\infty)}(\abs{H_u}),
$$
where $\chi_{(\eps,\infty)}$ is the characteristic function 
of the interval $(\eps,\infty)$. 
It is clear that $W_\eps\subset\Ran H_u$ and that 
$$
\overline{\Ran H_u}=\cup_{\eps>0} W_\eps.
$$
Further, for a sufficiently small $\eps>0$
(such that  $\eps<\svt_N$ if $\svt_N>0$ and 
$\eps<\sv_{N}$ if $\svt_N=0$)
 we have $W_H\subset W_\eps$. 
Let us prove that for such $\eps$,  the subspace
$$
V=W_\eps\cap W_H^\perp
$$
is trivial. 
We aim to use Lemma~\ref{general}. 

Since $H_u$ commutes with $\chi_{(\eps,\infty)}(\abs{H_u})$, 
it is clear that $H_u(W_\eps)=W_\eps$. 
Also, by the definition of $W_H$, we have $H_u(W_H)=W_H$.  
Thus we obtain  $H_u(V)=V$. 

Let us check that $K_u(V)\subset V$. 
First note that $u\in W_H$ and so $V\perp u$. 
By \eqref{a5}, 
it follows that for $f\in V$, we have 
$$
\varphi(H_u^2) f=\varphi(K_u^2) f
$$ 
for all bounded functions $\varphi$. Next, $K_u(W_K)\subset W_K$ and so 
if $f\in V$, then $K_u f\perp W_K$ and therefore $K_u f\perp u$. 
Thus, by the same logic  we obtain 
$$
\varphi(H_u^2) K_u f=\varphi(K_u^2) K_uf.
$$ 
Thus, for $f\in V$ we obtain
\begin{multline*}
\chi_{(\eps,\infty)}(\abs{H_u})K_u f
=
\chi_{(\eps,\infty)}(\abs{K_u})K_uf
\\
=
K_u\chi_{(\eps,\infty)}(\abs{K_u})f
=
K_u\chi_{(\eps,\infty)}(\abs{H_u})f
=
K_uf,
\end{multline*}
and so $K_uf\subset W_\eps$. It follows that $K_u(V)\subset V$, as claimed. 

Now applying Lemma~\ref{general}, we obtain $V=\{0\}$, and so 
$$
W_\eps=W_H
$$
for all sufficiently small $\eps>0$. It follows that $\Ran H_u$ is closed and coincides with $W_H$. 

3)
From the previous step and from Lemma~\ref{cr.e2} it follows that 
the spectrum of $H_u^2$ is finite and 
$$
\Sigma_H(u)=\{\sv_j\}_{j=1}^N.
$$
By Theorem~\ref{lma.d1}, it follows that $\Sigma_K(u)$ consists of $N$ elements. 
On the other hand, again by Lemma~\ref{cr.e2}, we know that 
$$
\{\svt_k\}_{k=1}^N\subset\Sigma_K(u); 
$$
thus, in fact we have $\{\svt_k\}_{k=1}^N=\Sigma_K(u)$. 
By Lemma~\ref{lma.e1}, the inner functions entering the spectral data for $u$ are
precisely $\{\psi_j\}$, $\{\wt \psi_k\}$. 
The proof of Theorem~\ref{inverse}(ii) is complete. 
\end{proof}


\end{document}